\newcommand{\embed}{\mathsf{L}}
\newcommand{\sign}{\mathsf{sign}}
\newcommand{\dett}{\mathsf{det}}
\newcommand{\support}{\mathsf{supp}}
\newcommand{\f}[2]{\mathbb{F}_{#1}^{#2}}
\newcommand{\w}[1]{\mathbf{#1}}
\newcommand{\stw}{\mathbb{S}^2}
\newcommand{\sth}{\mathbb{S}^3}
\newcommand{\rth}{\mathbb{R}^3}
\newcommand{\fefe}{\mathbb{F}}
\newcommand{\rere}{\mathbb{R}}
\newtheorem{proposition}{Proposition}
\newtheorem{theorem}{Theorem}
\newtheorem{lemma}{Lemma}
\newtheorem{question}{Question}
\newtheorem{remark}{Remark}
\newtheorem{claim}{Claim}
\newtheorem{example}{Example}
\begin{document}
\setstcolor{red}

\title[Computing the determinant of links through Fourier-Hadamard transforms]{Computing the determinant of links through Fourier-Hadamard transforms} 

\thanks{$^1$ Partially supported by grant PRIM80-CNRS}
\author[Baptiste Gros]{Baptiste Gros$^1$}
\address{IMAG, Univ.\ Montpellier, CNRS, Montpellier, France}
\email{baptiste.gros@umontpellier.fr}
\author[Ulises Pastor-D\'iaz]{Ulises Pastor-D\'iaz}
\address{Departamento de \'Algebra, Facultad de Matem\'aticas, Universidad de Sevilla, Spain}
\email{upastor@us.es}
\author[Jorge L. Ram\'irez Alfons\'in]{Jorge L. Ram\'irez Alfons\'in}
\address{IMAG, Univ.\ Montpellier, CNRS, Montpellier, France}
\email{jorge.ramirez-alfonsin@umontpellier.fr}


\keywords{Knot, Determinant, Centrally Symmetric Links, Maps}

\begin{abstract} In this paper, we present a  novel method to compute the determinant of a link using Fourier-Hadamard transforms of Boolean functions. 
We also investigate the determinant of {\em centrally symmetric} links (a special class of {\em strong achiral} links). In particular, we show that the determinant of a centrally symmetric link with an even number of components is equals zero. 
\end{abstract}

\maketitle

\section{Introduction}\label{sec;intro}

In this paper, we investigate the {\em determinant} of a link $L$, denoted by $\dett(L)$. The determinant is a useful invariant appearing in different contexts. For instance, when $L$ is a knot then $\dett(L)$ is the order of the first homology of the two-fold cover of $\sth$ branched over $L$ \cite{Rolf}. It si also strongly connected with diophantine equations \cite{St} and to the study of hyperbolic volumes of (complements in $\mathbb{S}^3$ of) alternating knots and links \cite{St1}.
\smallskip

There are many different ways to compute $\dett(L)$. As the determinant of either its {\em Seifert} matrix \cite[page 213]{Rolf}, or its {\em Goeritz} matrix \cite{GL} or its {\em coloring} matrix \cite[Chapter 3]{Liv}, as the evaluation of either its {\em Jones} polynomial $|V_L(-1)|$ or its {\em Alexander} polynomial $|\Delta_L(-1)|$  \cite[(12.3)]{jones} or its {\em Kauffman} bracket $|\langle D\rangle_{A=e^{\pi i/4}}|$ where $D$ is some diagram of $L$ \cite{Krebes}. The latter is explained in detail in Appendix \ref{Append}. More recently,  Dasbach {\em et al.} \cite[Theorem 3.2]{DFK} presented a formula to compute the determinant given by an alternating sum of the number of {\em quasi-trees of genus} $j$ of a {\em dessin d'enfant} $H$ (combinatorial map, oriented ribbon graph) consisting of $g(H)$ terms, where $g(H)$ is the genus of the surface where the dessin $H$ is embedded. 
\medskip


An {\em edge-signed} planar graph, denoted by $(G,\chi_E)$, is a planar graph $G$ equipped with a signature on its edges $\chi_E:E\rightarrow \{+,-\}$. 
Let $D_L$ be a diagram of a link $L$. It is known (see Section \ref{sec:back}) that it can be associated to $D_L$ a unique edge-signed planar graph $(G,\chi_E)$ from which $D_L$ can be uniquely determined ($G$ is called the {\em Tait graph} associated to $D_L$). Let $T$ be a spanning tree of $G$. We define
$$\sign(T)=\prod\limits_{e\in E(T)} \chi(e)$$
where $\chi(e)$ denotes the sign of edge $e$.
\smallskip

We say that $T$ is {\em positive} (resp. {\em negative}) if $\sign(T)=+$ (resp. $\sign(T)=-$).
\smallskip

The following less known simple combinatorial approach to compute $\dett(L)$ in terms of signed-spanning trees solely was proposed in \cite[Lemma 2.2]{CK}. 

\begin{lemma}\label{lem:main}\cite[Lemma 2.2]{CK}  Let $L$ be the link arising from the edge-signed connected planar graph $(G,\chi_E)$. Then, 
$$\dett(L)= \big| \# \{\text{positive-spanning trees of } G\}-\# \{\text{negative-spanning trees of } G\}\big|.$$
\end{lemma}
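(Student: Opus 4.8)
The plan is to realize $\dett(L)$ as the absolute value of a cofactor of a weighted Laplacian of $(G,\chi_E)$, and then let the weighted Matrix--Tree theorem do the combinatorics. First I would start from the Goeritz description of the determinant. Take the diagram $D_L$ whose Tait graph is $(G,\chi_E)$ and checkerboard-colour its regions so that the vertices of $G$ are the regions of one colour, the edges of $G$ are the crossings, and $\chi_E(e)\in\{+,-\}$ records the type of the crossing $e$. The (pre-)Goeritz matrix $\mathcal{G}=\mathcal{G}(D_L)$ attached to this data is, reading $\chi_E(e)$ as $\pm1$,
$$\mathcal{G}_{ij}=-\!\!\sum_{e\colon v_i\sim v_j}\!\!\chi_E(e)\quad(i\neq j),\qquad \mathcal{G}_{ii}=-\sum_{j\neq i}\mathcal{G}_{ij},$$
which is exactly the weighted Laplacian of $(G,\chi_E)$ with edge weights $w(e)=\chi_E(e)$. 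The classical fact to invoke is the Goeritz-matrix formula for the determinant (Goeritz; Gordon--Litherland): deleting the row and column of any one vertex $v_0$ yields a matrix $\mathcal{G}^{(v_0)}$ with $\dett(L)=|\det\mathcal{G}^{(v_0)}|$ (for a knot, $\mathcal{G}^{(v_0)}$ even presents $H_1$ of the double branched cover of $\mathbb{S}^3$ along $L$). This is consistent with all the equivalent descriptions of $\dett(L)$ recalled in the introduction (Seifert/Goeritz/colouring matrices, $|\Delta_L(-1)|$, $|V_L(-1)|$, $|\langle D\rangle_{A=e^{\pi i/4}}|$).

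Next I would apply the weighted Matrix--Tree theorem to $\mathcal{G}^{(v_0)}$: with weights $w(e)=\chi_E(e)\in\{+1,-1\}$,
$$\det\mathcal{G}^{(v_0)}=\sum_{T\ \text{spanning tree of }G}\ \prod_{e\in E(T)}w(e)=\sum_{T}\sign(T),$$
independently of the chosen $v_0$. Splitting the last sum according to $\sign(T)=+$ or $\sign(T)=-$ gives $\det\mathcal{G}^{(v_0)}=\#\{\text{positive-spanning trees of }G\}-\#\{\text{negative-spanning trees of }G\}$, and taking absolute values is the assertion. An alternative staying closer to the bracket language of the introduction is to use Thistlethwaite's spanning-tree state sum for $\langle D\rangle$ and specialise at $A=e^{\pi i/4}$, where $-A^2-A^{-2}=0$ kills all states except the single-loop ones, which biject with the spanning trees of $G$; but keeping track of the writhe normalisation and the $A^{\pm}$-powers makes this route messier than the Laplacian one.

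The steps that need care --- none of them a genuine obstacle once the Goeritz description is granted --- are the bookkeeping ones. First, pin down the sign dictionary between the edge-signature $\chi_E$ and the crossing signs entering $\mathcal{G}$, so that ``$\chi_E(e)=+$'' really corresponds to weight $+1$ in the Laplacian: conventions differ between sources, and while a single global sign on $\det\mathcal{G}^{(v_0)}$ is irrelevant under $|\cdot|$, a per-edge sign error would not be. Second, use connectedness of $G$ to guarantee that $\mathcal{G}^{(v_0)}$ is the full reduced $(|V(G)|-1)\times(|V(G)|-1)$ Laplacian and that $G$ has at least one spanning tree, so the right-hand side is well defined and non-vacuous; note also that parallel edges (multiple crossings between the same pair of regions) are handled automatically by the weighted Matrix--Tree theorem and self-loops are irrelevant to it. Third, observe that the Gordon--Litherland correction term, which intervenes only for the \emph{signature} of the form, does not affect $|\det\mathcal{G}^{(v_0)}|$ and hence plays no role here. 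The weighted Matrix--Tree theorem does all the combinatorial work.
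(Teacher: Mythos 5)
Your proof is correct, but it takes a genuinely different route from the one in the paper. You black-box the Goeritz description $\dett(L)=|\det\mathcal{G}^{(v_0)}|$ and observe that the unreduced Goeritz matrix is precisely the weighted Laplacian of the Tait graph with weights $\chi_E(e)=\pm1$, so a single application of the weighted Matrix--Tree theorem (valid for arbitrary, including negative, weights via Cauchy--Binet on the incidence-matrix factorization) turns the cofactor into $\sum_T\sign(T)$; your cautions about the sign dictionary, connectedness, loops, and the Gordon--Litherland correction are exactly the right ones, and note that even a globally reversed dictionary only changes the cofactor by $(-1)^{|V|-1}$, which the absolute value absorbs. The paper instead follows the route you dismiss as ``messier'': it starts from $\dett(L)=|\langle D\rangle_{A=e^{\pi i/4}}|$ (no writhe issue, since the normalization factor is a unit), proves as a separate lemma that the monocyclic states of the Kauffman bracket are exactly those whose contracted edges form a spanning tree of $G$, and then does the exponent bookkeeping to show each spanning tree $T$ contributes $(-1)^{c^-(T)}$ times a global phase. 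Your argument is shorter and leans on standard algebraic machinery; the paper's is longer but self-contained modulo the bracket evaluation, and its point is precisely to exhibit the role of signed spanning trees inside the state model, which your Laplacian route bypasses entirely. Either proof establishes the lemma.
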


 \begin{remark}
(a) If $L$ is {\em alternating}, then all the edges have the same sign and thus, in this case, all the spanning trees are either positive or negative. Therefore, the difference between positive and negative spanning trees would be simply the number of spanning trees of $G$. The latter is a well-known result that can be traced back at least as far as Crowell \cite{Crowell}. 
\smallskip

(b) A similar expression for $\dett(L)$ is given in \cite[Corollary 4.2]{DFK} but in terms of the dessin $H$ and its dual, where $H$ is assumed to be of {\em Turaev genus} one. The latter allows to compute $\dett(L)$ only in the case when $L$ is of Turaev genus one.  
\end{remark}

Champanerkar and Kofman \cite{CK} obtained Lemma \ref{lem:main} by considering a specific evaluation of the {\em spanning tree expansion} of the Jones polynomial, introduced by Thistlethwaite \cite{Th}, and by using certain gradings introduced by the same authors in \cite{CK1}. In the Appendix, we propose a less artificial alternative proof of Lemma \ref{lem:main}. Our approach, based on the state model for the Kauffman bracket, shed light on the role of signed-spanning trees in such models and thus giving further understanding of the connection with the determinant.
\smallskip

\subsection{Fourier-Hadamard transforms}
Our first goal is to introduce a novel method to compute $\dett(L)$ for any link $L$. We do so by using Fourier-Hadamard transforms of Boolean fonctions. Let $f : \fefe_2^n\rightarrow \fefe_2$ be a Boolean function and let $\support(f)=\{ {\bf x}\in \fefe_2^n \ \vert \  f({\bf x})\neq 0\}$ be its {\em support}. The {\em Fourier-Hadamard transform} of $f$ is defined as
$$\widehat f({\bf u})=\sum\limits_{{\bf x}\in \fefe_2^n} f({\bf x}) (-1)^{{\bf x}\cdot {\bf u}}=\sum\limits_{{\bf x}\in \support(f)} (-1)^{{\bf x}\cdot {\bf u}}$$

where ``$\cdot$'' is some chosen inner product in $ \fefe_2^n$ (for instance the usual inner product ${\bf x}\cdot {\bf u}=x_1u_1\oplus \cdots \oplus x_n u_n$ where $\oplus$ denotes the sum modulo 2).
\smallskip

The Fourier-Hadamard {\em spectrum} of $f$ is the multiset of all possible values of $\widehat f({\bf u})$.
\smallskip

It is known \cite{Ca} that any Boolean function $f$ (in fact any  {\em pseudo-Boolean} function) can be represented by elements in the quotient ring $\mathbb{R}[x_1,\ldots,x_n]/(x_1^2-x_1,\ldots,x_n^2-x_n)$. We shall call it the {\em Numerical Normal Form} (NNF). Such a representation can be written as 
$$f({\bf x})=\sum\limits_{{\bf y}\in \fefe_2^n} \lambda_{\bf y} {\bf x}^{\bf y}$$
where ${\bf x}^{\bf y}=\prod\limits_{i=1}^n x_i^{y_i}$.
\smallskip

We refer the reader to \cite{Ca} for further background on the Fourier-Hadamard transform. 
\smallskip

Let $G=(V,E)$ be a connected planar graph with $n=|E|$ and let $\mathcal{T}_G$ be the set of spanning trees of $G$. For any $F\subset E$, we let ${\bf v}_F=(v_1,\dots ,v_n)$ be the {\em characteristic vector} of $F$, that is, $v_i=1$ if $i\in F$ and 0 otherwise. Let $f_G$ be the Boolean function with $\support(f_G)=\{ {\bf v}_T \in \fefe_2^n\  \vert \ T\in \mathcal{T}_G \}$. We define the {\em $FH_G$-polynomial} of $G$, denoted by $FH_G(x_1,\ldots , x_n)$, to be the NNF of the Fourier-Hadamard transform of $f_G$, that is, 
$$FH_G(x_1,\ldots , x_n)=\widehat f_G(x_1,\ldots , x_n).$$ 

\begin{theorem}\label{theo:main}  Let $L_G$ be the link arising from the edge-signed connected planar graph $(G,\chi_E)$. Then,
$$\dett(L_G)=\big| FH_G({\bf v})\big|$$

where ${\bf v}=(v_1,\ldots ,v_n)$ with $v_i=\frac{1-\chi_E(i)}{2}$.
\end{theorem}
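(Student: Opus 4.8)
The plan is to reduce Theorem~\ref{theo:main} to Lemma~\ref{lem:main} via the interpolation property of the Numerical Normal Form, so that the whole argument becomes a short bookkeeping computation. The first step is to unwind the vector $\mathbf{v}$: reading the signs $+,-$ as the integers $+1,-1$, the coordinate $v_i=\frac{1-\chi_E(i)}{2}$ equals $0$ when edge $i$ is positive and $1$ when it is negative. In particular $\mathbf{v}\in\{0,1\}^n$, and it is exactly the characteristic vector $\mathbf{v}_{E^-}$ of the set $E^-$ of negatively-signed edges of $G$; this is the only place where the expression $\frac{1-\chi_E(i)}{2}$ is actually used.

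The second step is to recall why $FH_G(\mathbf{v})=\widehat{f_G}(\mathbf{v})$. If $g\colon\fefe_2^n\to\mathbb{R}$ is a pseudo-Boolean function with NNF $\sum_{\mathbf{y}\in\fefe_2^n}\lambda_{\mathbf{y}}\mathbf{x}^{\mathbf{y}}$, then evaluating this real polynomial at a point $\mathbf{u}\in\{0,1\}^n\subset\mathbb{R}^n$ returns the value of $g$ at the corresponding point of $\fefe_2^n$: indeed $u_i^2=u_i$ for $u_i\in\{0,1\}$, so the evaluation factors through the quotient ring $\mathbb{R}[x_1,\dots,x_n]/(x_1^2-x_1,\dots,x_n^2-x_n)$ in which the NNF lives, and in that ring the NNF coincides with $g$ by construction (see~\cite{Ca}). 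Applying this to $g=\widehat{f_G}$ at the $0/1$ vector $\mathbf{v}$ gives
\[
FH_G(\mathbf{v})=\widehat{f_G}(\mathbf{v})=\sum_{T\in\mathcal{T}_G}(-1)^{\mathbf{v}_T\cdot\mathbf{v}}.
\]

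The third step is to identify each summand with a spanning-tree sign. Since $\mathbf{v}=\mathbf{v}_{E^-}$, the inner product $\mathbf{v}_T\cdot\mathbf{v}$ is the parity of $|E(T)\cap E^-|$, so $(-1)^{\mathbf{v}_T\cdot\mathbf{v}}=\prod_{e\in E(T)}\chi_E(e)=\sign(T)$. Hence $FH_G(\mathbf{v})=\#\{\text{positive spanning trees of }G\}-\#\{\text{negative spanning trees of }G\}$, and taking absolute values and invoking Lemma~\ref{lem:main} (legitimate since $G$ is connected) yields $\dett(L_G)=\big|FH_G(\mathbf{v})\big|$.

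I do not expect a real obstacle; the argument is essentially unwinding definitions. The one step that deserves some care is the NNF interpolation property used above --- that replacing $\widehat{f_G}$ by its NNF polynomial $FH_G$ and then evaluating at a $0/1$ vector does not change the value --- which I would either cite from~\cite{Ca} or justify in the single line indicated.
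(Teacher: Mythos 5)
Your proof is correct and follows essentially the same route as the paper: identify $\mathbf{v}$ as the characteristic vector of the negative edges, observe that $(-1)^{\mathbf{v}_T\cdot\mathbf{v}}=\sign(T)$ so that $FH_G(\mathbf{v})$ equals the signed count of spanning trees, and conclude by Lemma~\ref{lem:main} (the paper packages the middle step as its Lemma~\ref{res}, but it is the same computation). Your extra remark justifying the NNF interpolation at $0/1$ vectors is a welcome point of care that the paper leaves implicit.
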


Here is an interesting feature of our approach. Consider the following

\begin{question} Let $k\ge 0$ be an integer and let $G=(E,V)$ be a planar connected graph. Is there an edge-signature $\chi_E$ such that $\dett(L)=k$ where $L$ is the link arising from $(G,\chi_E)$ ? 
\end{question}  

Our method yields to a straightforward procedure to answer the above question. It is enough to calculate the $FH_G$-polynomial (that is, $FH_G(\w{x})=\widehat f_G(x_1,\ldots , x_n)$) and then consider the spectrum of $\widehat  f_G$. Indeed, it turns out that, by construction, each value in the spectrum of $\widehat  f_G$ corresponds to an evaluation of $FH_G(\w{x})$ on a specific vector arising from a signature $\chi'_E$ of $E(G)$. Therefore, by Theorem \ref{theo:main}, the set of the values in the spectrum of $\widehat  f_G$ corresponds to the set of values given by $\dett(L)$ where $L$ is the link arising from $(G,\chi'_E)$. Two ways to calculate $FH_G(\w{x})$ (one an explicit formula and the other a recursive method) will be discussed.
\medskip

\subsection{Centrally symmetric links}
Our second goal is to investigate the determinant of {\em centrally symmetric} links. Let
\[\begin{array}{lclc}
c: & \mathbb{R}^3& \rightarrow & \mathbb{R}^3\\ 
& (x,y,z) & \mapsto & (-x,-y,-z)
\end{array}
\]
be the {\em central symmetric} function.
 
We say that a link $L$ is {\em centrally symmetric} if it admits a representation $\hat L$ in $ \mathbb{R}^3$ such that $c(\hat L)=\hat L$. 

\begin{remark} Recall that a link is {\em strongly achiral} if it admits a representation in $\mathbb{R}^3$ which is preserved by some orientation-reversing diffeomorphism. We notice that such diffeomorphisms can be conjugated to either $c$ or the function $i: (x,y,z)\mapsto (-x,y,z)$.  We thus have that centrally symmetric links are a special case of strong achirality.
\end{remark}

By using Lemma \ref{lem:main}, we are able to show the following

\begin{theorem}\label{th:det:sym} Let $L$ be a centrally symmetric link with $n$ components. If $n$ is even then $\dett(L)=0$.
\end{theorem}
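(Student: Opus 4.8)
The plan is to use Lemma~\ref{lem:main} together with the symmetry structure imposed by $c$ on a diagram of $L$. First I would fix a centrally symmetric representation $\hat L$ with $c(\hat L)=\hat L$ and choose a generic projection direction so that the induced diagram $D_L$ is itself symmetric under the planar point-reflection $\bar c$ induced by $c$ (a central symmetry of the projection plane composed with a crossing-swap, since $c$ is orientation-reversing on $\mathbb{R}^3$: $c$ sends over-strands to under-strands). This gives an involution $\sigma$ on the crossings of $D_L$, hence an involution on the vertices, edges and faces of the Tait graph $(G,\chi_E)$; the key point is that $\sigma$ is a \emph{sign-preserving} graph automorphism of $(G,\chi_E)$ that is moreover fixed-point-free on the relevant combinatorial data (this is where central symmetry, as opposed to a reflection with a fixed axis, is essential). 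I expect this is the main obstacle: making precise that $\sigma$ has no fixed crossings and that it acts as an orientation-reversing automorphism of the sphere, so that it swaps the two checkerboard colorings and identifies $G$ with (a signed version of) its planar dual.

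With $\sigma$ in hand, the strategy is to show that $\sigma$ induces a fixed-point-free involution on the set $\mathcal{T}_G$ of spanning trees of $G$ which \emph{reverses} $\sign(T)$, i.e. $\sign(\sigma(T)) = -\sign(T)$; then the positive and negative spanning trees are matched up in pairs, so $\#\{\text{positive}\} = \#\{\text{negative}\}$ and Lemma~\ref{lem:main} gives $\dett(L)=0$. Here the parity of $n$ enters exactly through the sign: since $\sigma$ is a planar-dual-type automorphism, $\sigma(T)$ differs from $T$ (up to reindexing by $\sigma$) by a ``twist'' whose effect on $\sign$ is controlled by the number of components of $L$. Concretely, I would relate the number of edges of $G$, the number of vertices, and the number of faces to $n$ via the standard dictionary (number of link components $=$ number of components of the relevant trip/Seifert-type partition), and use Euler's formula on the sphere together with $\sigma$-invariance to pin down the sign change as $(-1)^{n}$ or $(-1)^{n+1}$; when $n$ is even this forces the sign reversal. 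The fact that $\sigma$ is fixed-point-free on $\mathcal{T}_G$ follows because a $\sigma$-fixed spanning tree would have to contain a $\sigma$-fixed edge or a $\sigma$-orbit forming a cycle, contradicting either fixed-point-freeness of $\sigma$ on edges or acyclicity.

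An alternative, possibly cleaner route avoids spanning trees of a single graph: central symmetry makes $G$ and its planar dual $G^{*}$ isomorphic as edge-signed graphs \emph{up to global sign flip}, and $\dett(L)$ can be read from either (the link is the same). Combining $\dett(L_G) = \dett(L_{G^{*}})$ with the behavior of $\sign$ under dualizing a spanning tree $T \mapsto E\setminus T$ — which multiplies $\sign$ by $\prod_{e\in E}\chi_E(e)$ raised to a power determined by $|E|$, hence again by a parity tied to $n$ — yields the cancellation directly. The delicate step in either version is the same: correctly tracking how the orientation-reversing symmetry interacts with the checkerboard coloring and with edge signs, and confirming that the resulting sign discrepancy is governed precisely by the parity of the number of components, so that $n$ even $\Rightarrow \dett(L) = 0$.
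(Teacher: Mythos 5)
Your high-level strategy coincides with the paper's: realize the central symmetry combinatorially as an involution of the Tait graph, show it pairs positive spanning trees with negative ones, and conclude by Lemma~\ref{lem:main}. But the specific structure you assign to the symmetry is wrong for the even-component case, and the step where the parity of $n$ actually enters is missing. The paper works on $\mathbb{S}^2$ via the radial projection, so the symmetry becomes the \emph{fixed-point-free} antipodal map $\alpha$, and the dichotomy is: $\alpha$ is either color-preserving and sign-\emph{reversing} (hence an automorphism of $G$ itself), or color-reversing and sign-preserving (hence an isomorphism $G\to G^*$). The paper's Lemma~\ref{lem:parity} — proved by an induction on orientation-preserving smoothings of antipodal pairs of crossings down to a base configuration of disjoint antipodal circles — shows that $n$ even forces the \emph{first} alternative. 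You assert the opposite (sign-preserving, color-swapping, $G\cong G^*$), which is the structure belonging to \emph{odd} $n$; your ``alternative route'' via $G\cong G^*$ is therefore built on a premise that fails precisely in the case the theorem addresses. You do hedge (``$(-1)^n$ or $(-1)^{n+1}$''), but pinning down that sign is the entire content of the proof, and ``Euler's formula plus the standard dictionary'' is not an argument for it: nothing in your sketch connects the number of link components to any parity of the Tait graph.

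The second problem is your choice of a planar projection with the induced point-reflection $\bar c$. That map is a rotation by $\pi$ of the plane (equivalently of $\mathbb{S}^2$ about an axis), and it has fixed points — it fixes the outer face and the central point. Consequently the argument that the number of black faces (hence $|V(G)|$) is even collapses: a fixed face can absorb one color class and leave $|V(G)|$ odd. In the paper this evenness is exactly what makes $|E(T)|=|V(G)|-1$ odd, so that a sign-reversing edge automorphism gives $\sign(\alpha(T))=(-1)^{|E(T)|}\sign(T)=-\sign(T)$; the evenness is obtained from Brouwer's fixed-point theorem applied to the genuinely fixed-point-free antipodal map on $\mathbb{S}^2$. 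So you would need to (i) replace your planar setup by the spherical one, (ii) establish the color/sign dichotomy for $\alpha$ (the paper's Theorem~\ref{thm:centralsymm}), and (iii) prove the parity lemma tying $n$ even to $\alpha$ color-preserving. As written, the proposal assumes the conclusion of (iii) in a form that is actually false for even $n$, so the sign-reversal on trees is not established.
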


The paper is organized as follows. In the next section, we review some necessary background in knot theory as well as some needed notions on Fourier-Hadamard transform of Boolean functions. In Section \ref{sec:FH}, we prove Theorem \ref{theo:main} and discuss two methods to compute the $FH_G$-polynomial (Propositions \ref{baseform} and \ref{recformula}). In Section \ref{sec:symmetry}, we show Theorem \ref{th:det:sym}. On the way to prove the latter, we also present a combinatorial characterization for a link to be centrally symmetric (Theorem \ref{thm:centralsymm}) as well as  a characterization for the parity on the number of its components (Lemma \ref{lem:parity}). Finally, in Appendix \ref{Append}, we present a proof of Lemma \ref{lem:main}.

\section{Preliminaries}\label{sec:back}

\subsection{Knot theory}\label{sec;prem} We refer the reader to \cite{Adam} for standard background on knot theory. 

A {\em link diagram} $D(L)$ of a link $L$ is a regular projection of $L$ into $\mathbb{R}^2$ in such a way that the projection of each component is smooth, no self-intersected  and at most two curves intersect at any point. At such a crossing, the curves intersect transversally, and a diagram has a finite number of intersection points. At each crossing point of the link diagram the curve which goes over the other is specified. The {\em shadow} of a link diagram $D$ is the 4-regular graph obtained when the over/under passes of $D$ are ignored. We say that the diagram $D$ is {\em connected} if the shadow is connected. Since the shadow is Eulerian (4-regular) then its faces can be 2-colored, say with colors black and white. We thus have that each vertex is incident to 4 faces alternatively colored around the vertex; see Figure \ref{fig1}.

\begin{figure}[H]
\centering
\includegraphics[width=.54\linewidth]{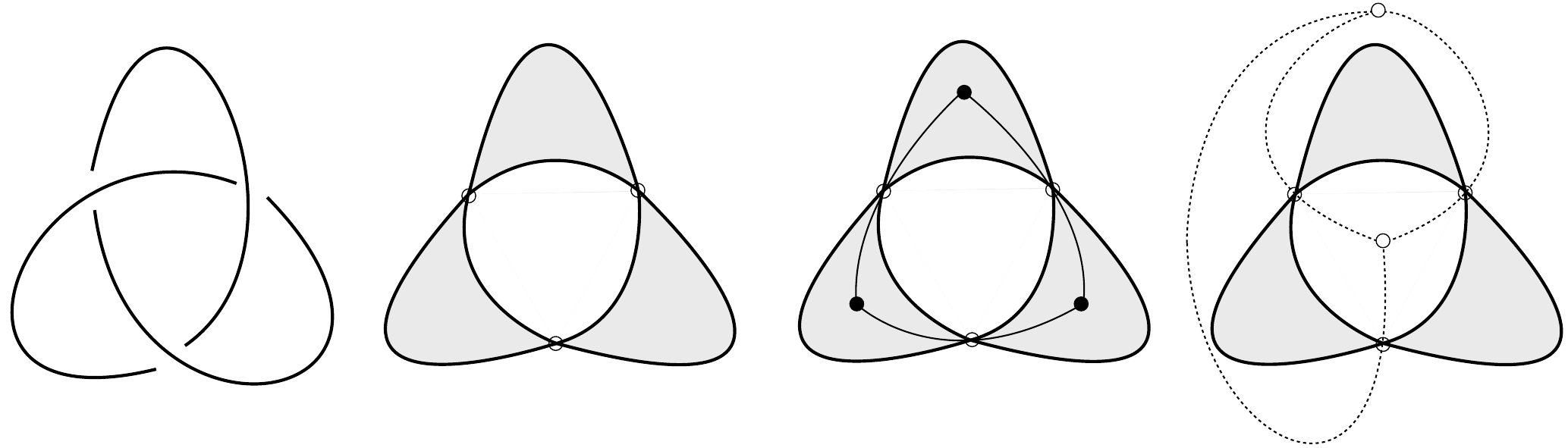}
\caption{(From left to right) A diagram of the Trefoil; its shadow with 2-colored faces (vertices on white crossed circles); the corresponding Black graph (bold edges and black circles); and White graph (dotted edges and white circles).}
\label{fig1}
\end{figure}

Given such a coloring, we can define two graphs, one on the faces of each color. Let $B_D$ denote the graph with black faces as its vertices and where two vertices are joined if the corresponding faces share a vertex ($B_D$ is called the {\em checkerboard graph} of $D$). We define the graph $W_D$ on the white faces of the shadow analogously.  We notice that $B_D$ (and $W_D$) may have multiple edges. The two graphs $B_D$ and $W_D$ are called {\em faces graphs} of the shadow. Since the shadow of a knot is connected, $B_D$ and $W_D$ are also connected, and it is not hard to see that $W_D=B_D^*$ and $B_D=W_D^*$, that is, the two faces graphs are duals of each other. 
\smallskip

Recall that the {\em medial graph} of a planar graph $H$, denoted by $med(H)$ is the graph obtained by placing one vertex on each edge of $H$ and joining two vertices if the corresponding edges are consecutive on a face of $H$. We notice that $med(H)$ is 4-regular since each edge is shared by exactly two faces. Multiple edges can arise in $med(H)$ when $H$ has faces of length two. We notice that $med(B_D)$ and $med(W_D)$ are the same (since $B_D$ and $W_D$ are duals) and that $med(W_D)$ is exactly the shadow of $D$.
\smallskip

An \textit{edge-signed} planar graph, denoted by $(G,\chi_E)$, is a planar graph $G(V,E)$ equipped with a signature on its edges $\chi_E:E\rightarrow \{+,-\}$. We will denote by $-\chi_E$ the signature of $G$ satisfying $-\chi_E(e)=-(\chi_E(e))$ for every $e\in E(G)$. 
\smallskip

Given a crossing of the link diagram, we designate this crossing as {\em positive} or {\em negative} according to the {\em left-over-right} and {\em right-over-left} rules from the point of view of black around the crossing; see Figure \ref{fig2} (a). The latter induces an opposite signing on each crossing by the same rules but now from the point of view of white around the crossing; see Figure \ref{fig2} (b).

\begin{figure}[H]
\centering
\includegraphics[width=.58\linewidth]{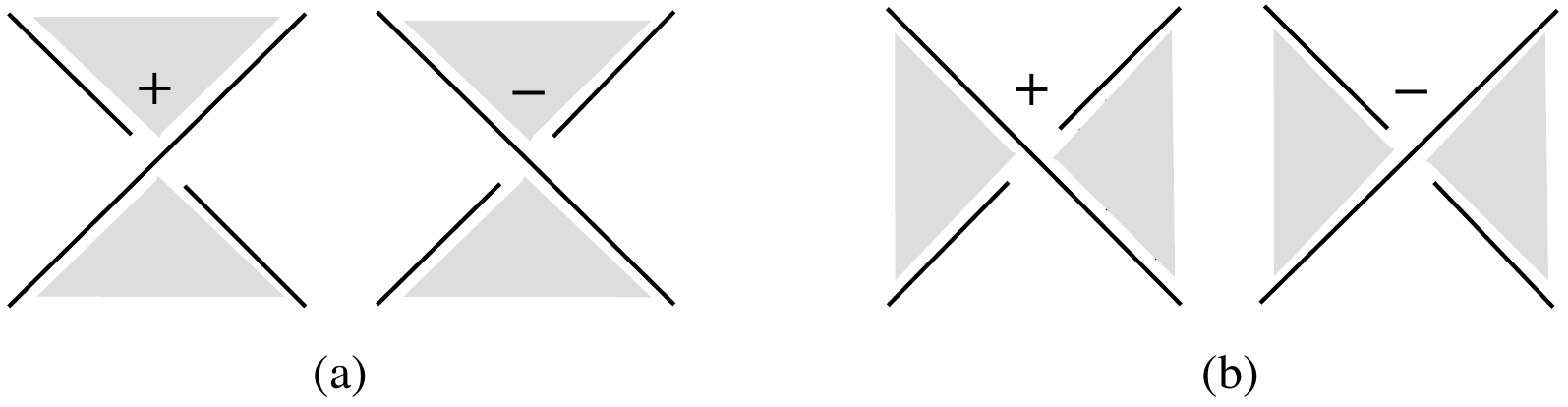}
\caption{(a) Left-over-right rule from black point of view. (b) Right-over-left rule from white point of view.}
\label{fig2}
\end{figure}


If the crossing is positive, relative to the black faces, then the corresponding edge is declared to be positive in $B_D$ and negative in $W_D$. Therefore, in this fashion, a link diagram $D$ determines a dual pair of signed planar graphs $(B_D,\chi_E)$ and $(W_D,\chi_E)$, where the signs on edges are swapped when moving to the dual.  

\begin{remark}\label{ram;blackwhite} A link diagram can be uniquely recovered from either $(B_D,\chi_E)$ or $(W_D,\chi_E)$.
\end{remark}

We thus have that, given an edge-signed planar graph $(G,\chi_E)$, we can associate to it (in a canonical way) a link diagram $D(L)$ such that $(B_D,\chi_E)$ (and $(W_D,\chi_E)$) determines $(G,\chi_E)$. The graph $G$ is called the {\em Tait graph} of the link $L$ with diagram $D$. The construction is easy. We just consider the medial graph of $G$ with signatures on its vertices (induced by the edge-signature $\chi_E$ of $G$). The desired diagram, denoted by $D(G,\chi_E)$, is obtained by determining the under/over pass at each crossing according to Left-over-right (or Right-over-left) rule associated to the sign of the corresponding edge of $G$; see Figure \ref{fig3}.

\begin{figure}[H]
\centering
\includegraphics[width=.6\linewidth]{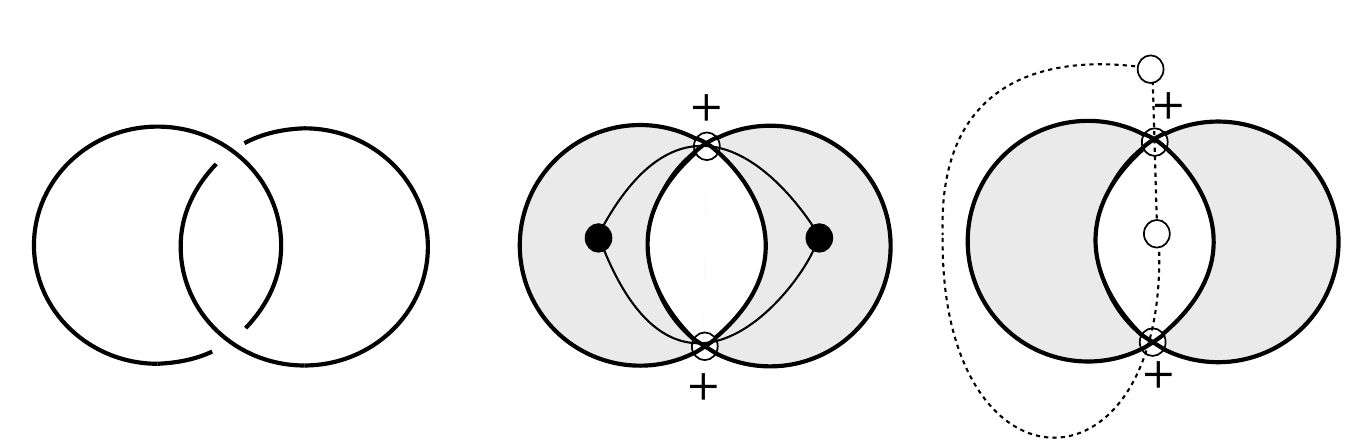}
\caption{(From left to right) diagram $D$ of link $2_1$; signed Black graph $(B_D,\chi_E)$; and signed White graph $(W_D,\chi_E)$.}
\label{fig3}
\end{figure}

A link $L$ is {\em alternating} if it admits a diagram $D$ such that the crossings alternate under/over while we go through the link. We notice that a link $L$ is alternating if and only if $L$ admits a diagram $D(G,\chi_E)$ with $\chi_E(e)=+$ (or $\chi_E(e)=-$) for all edge $e\in E(G)$.
\smallskip

Two links $L_1$ and $L_2$ are {\em equivalent} if there is an orientation-preserving homeomorphism 
$\varphi :\mathbb{R}^3\rightarrow  \mathbb{R}^3$ with $\varphi(L_1)=L_2$. We shall denote by $[L]$ (called {\em link-type}) the class of links equivalent to $L$.
\smallskip

\section{Novel approach}\label{sec:FH}

Let $G=(V,E)$ be a connected planar graph with $n=|E|$ and let $\mathcal{T}_G$ be the set of spanning trees of $G$.
Recall that $f_G$ is the Boolean function having $\support(f_G)=\{ {\bf v}_T \in \fefe_2^n \ \vert \  T\in \mathcal{T}_G \}$ 
where ${\bf v}_T=(v_1,\dots ,v_n)$ is the characteristic vector of the tree $T$, that is, $v_i=1$ if edge $i\in T$ and 0 otherwise. Finally recall that the {\em $FH_G$-polynomial} of $G$ is defined as $FH_G(x_1,\ldots , x_n)=\widehat f_G(x_1,\ldots , x_n)$.  

\begin{lemma}\label{res}
Let $G$ be a graph with $n$ edges and let $\w{u}\in\f{2}{n}$. Then,
$$
FH_G(\w{u}) = \big|\{T\in\mathcal{T}_G\mid |T\cap A_{\w{u}}| \equiv 0 \ (\bmod \ 2)\}\big| - \big|\{T\in\mathcal{T}_G\mid |T\cap A_{\w{u}}| \equiv 1 \ (\bmod \ 2)\}\big|
$$
where $A_{\w{u}} = \{i\in E\mid x_i = 1\}$.
\end{lemma}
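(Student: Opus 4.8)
The plan is to unfold the definitions and evaluate the Fourier-Hadamard transform of $f_G$ at the point $\w{u}\in\f{2}{n}$ directly. By definition, $FH_G(\w{u}) = \widehat{f_G}(\w{u}) = \sum_{\w{x}\in\support(f_G)}(-1)^{\w{x}\cdot\w{u}}$, and since $\support(f_G) = \{\w{v}_T \mid T\in\mathcal{T}_G\}$ is exactly the set of characteristic vectors of spanning trees, this sum equals $\sum_{T\in\mathcal{T}_G}(-1)^{\w{v}_T\cdot\w{u}}$. The first step, then, is to interpret the exponent $\w{v}_T\cdot\w{u}$ combinatorially: with the usual inner product over $\f{2}{n}$, we have $\w{v}_T\cdot\w{u} = \bigoplus_{i=1}^n (v_T)_i u_i$, and since $(v_T)_i = 1$ precisely when $i\in T$ and $u_i = 1$ precisely when $i\in A_{\w{u}}$, this is the parity of $|T\cap A_{\w{u}}|$, i.e.\ $\w{v}_T\cdot\w{u} \equiv |T\cap A_{\w{u}}| \pmod 2$.

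The second step is a routine accounting argument: partition $\mathcal{T}_G$ into those $T$ with $|T\cap A_{\w{u}}|$ even and those with $|T\cap A_{\w{u}}|$ odd. For the former, $(-1)^{\w{v}_T\cdot\w{u}} = +1$; for the latter, $(-1)^{\w{v}_T\cdot\w{u}} = -1$. Substituting into $\sum_{T\in\mathcal{T}_G}(-1)^{\w{v}_T\cdot\w{u}}$ immediately yields
$$
FH_G(\w{u}) = \big|\{T\in\mathcal{T}_G\mid |T\cap A_{\w{u}}|\equiv 0\ (\bmod\ 2)\}\big| - \big|\{T\in\mathcal{T}_G\mid |T\cap A_{\w{u}}|\equiv 1\ (\bmod\ 2)\}\big|,
$$
which is the claimed identity.

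The one subtlety worth spelling out — and arguably the only place this needs care — is the identification $FH_G(\w{u}) = \widehat{f_G}(\w{u})$ when $\w{u}$ is treated as an input to the NNF polynomial rather than as a formal Fourier argument. Here one uses that the NNF is an exact polynomial representation of the pseudo-Boolean function $\widehat{f_G}$ over $\{0,1\}^n$ (as recalled from \cite{Ca}), so evaluating the polynomial $FH_G$ at a Boolean vector $\w{u}$ returns exactly the value $\widehat{f_G}(\w{u})$ of the Fourier-Hadamard transform at $\w{u}$. There is no real obstacle here; the statement is essentially a translation of notation, and the main thing to get right is keeping the two roles of a vector in $\f{2}{n}$ (as a subset-indicator versus as a Fourier character) cleanly separated. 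I would present the whole argument as a short chain of equalities with the parity observation stated as the key intermediate remark.
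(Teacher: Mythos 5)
Your proof is correct and follows essentially the same route as the paper's: unfold the definition of the Fourier--Hadamard transform over the support of $f_G$, identify $\w{v}_T\cdot\w{u}$ with the parity of $|T\cap A_{\w{u}}|$, and split the sum accordingly. Your extra remark about evaluating the NNF polynomial at a Boolean vector is a fair point of care that the paper leaves implicit, but it does not change the argument.
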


\begin{proof} Let $\w{u}\in\f{2}{n}$. By definition, the Fourier--Hadamard transform associated to a Boolean function with support $\support_{\mathcal{T}_G}$ is given by
$$
FH(\w{u}) = \sum_{\w{t}\in \support_{\mathcal{T}_G}} (-1)^{\w{u}\cdot\w{t}} = \sum_{\substack{\w{t}\in \support_{\mathcal{T}_G}\\\w{t}\cdot\w{u} = 0}} 1 + \sum_{\substack{\w{t}\in S_{\mathcal{T}_G}\\\w{t}\cdot\w{u} = 1}} (-1).
$$
We thus have that $\w{u}\cdot\w{t} = 0$ if and only if $A_{\w{u}}\cap T$ is even where $T$ is the spanning tree such that $\w{t}$ is the characteristic vector of $T$. The result follows.
\end{proof}
\smallskip

{\em Proof of Theorem \ref{theo:main}.} Let  $(G,\chi_E)$ be an edge-signed connected planar graph and let $L$ be the link arising from $(G,\chi_E)$.
\smallskip

We claim that $\{T\in\mathcal{T}_G\mid |T\cap A_{\w{v}}| \equiv 0 \ (\bmod \ 2)\}$ with $\w{v}=(v_1,\ldots ,v_n)$ where $v_i=\frac{1-\chi_E(i)}{2}$ is exactly the set of all the positive-spanning trees of $G$. Indeed, by definition, $v_i=1$ if $\chi_E(i)=-$ and 0 otherwise. Thus, the set $T\cap A_{\w{v}}$ contains all negative-signed edges contained in the tree $T$. Therefore, $T$ is a positive-signed tree if and only if $|T\cap A_{\w{v}}| \equiv 0 \ (\bmod \ 2)$. Analogously, the set $\{T\in\mathcal{T}_G\mid |T\cap A_{\w{v}}| \equiv 1 \ (\bmod \ 2)\}$
is exactly the set of all the negative-spanning trees of $G$.

By combining, Lemmas \ref{lem:main} and \ref{res} we have

$$\begin{array}{ll}
\dett(L)&= \big|\# \{\text{positive-spanning trees of } G\}-\# \{\text{negative-spanning trees of } G\}\big|\\
& = \big| \#\{T\in\mathcal{T}_G\mid |T\cap A_{\w{v}}| \equiv 0 \ (\bmod \ 2)\} - \#\{T\in\mathcal{T}_G\mid |T\cap A_{\w{v}}| \equiv 1 \ (\bmod \ 2)\}\big| \\
& = \big| FH_G(\w{v}) \big|.
\end{array}$$
\hfill$\square$

\subsection{Computing $FH_G$-polinomial}  We now focus our attention on the calculation of $FH_G(\w{x})$. We will first give an explicit description of the $FH_G$-polynomial.






\begin{proposition}\label{baseform} Let $G=(V,E)$ be a graph with $n=|E|$ and let $\w{x}\in\f{2}{n}$. Then,
$$
FH_G(x_1,\ldots,x_n) = \sum_{T\in\mathcal{T}_G} \left(\prod_{i\in E(T)} (1-2x_i)\right).
$$
\end{proposition}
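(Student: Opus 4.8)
The plan is to express $FH_G(x_1,\dots,x_n)$ as an explicit polynomial by computing the Fourier--Hadamard transform directly from its definition and then recognizing the resulting expression as the Numerical Normal Form. Recall that by definition
$$
FH_G(x_1,\dots,x_n) = \widehat{f_G}(x_1,\dots,x_n) = \sum_{T\in\mathcal{T}_G} (-1)^{\w{v}_T\cdot\w{x}},
$$
where $\w{v}_T$ is the characteristic vector of the spanning tree $T$. The key observation is that $(-1)^{\w{v}_T\cdot\w{x}} = \prod_{i\in E(T)} (-1)^{x_i}$, since $\w{v}_T\cdot\w{x} = \sum_{i\in E(T)} x_i$. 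The first step is therefore to replace, for each $i$, the formal symbol $(-1)^{x_i}$ by the linear polynomial $1-2x_i$: these two functions agree on $\{0,1\}$ (value $1$ at $x_i=0$, value $-1$ at $x_i=1$), which is precisely what is needed, because the NNF is the unique representative in $\mathbb{R}[x_1,\dots,x_n]/(x_1^2-x_1,\dots,x_n^2-x_n)$ and a polynomial that is multilinear (degree at most one in each variable) is determined by its values on $\f{2}{n}$.

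Carrying this out, one gets
$$
FH_G(x_1,\dots,x_n) = \sum_{T\in\mathcal{T}_G} \prod_{i\in E(T)} (1-2x_i),
$$
and it remains to check that the right-hand side is a legitimate NNF, i.e.\ that it is multilinear. This is immediate: each factor $1-2x_i$ has degree one in $x_i$ and does not involve any other variable, and each variable $x_i$ appears in at most one factor of a given product $\prod_{i\in E(T)}(1-2x_i)$, so every monomial appearing has degree at most one in each variable. Hence the displayed polynomial lies in the quotient ring with no reduction needed, and since it agrees with $\widehat{f_G}$ on all of $\f{2}{n}$ by the computation above, it must equal the NNF of $\widehat{f_G}$, which is by definition $FH_G$.

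I do not anticipate a serious obstacle here; the only point requiring a little care is the justification that evaluating on $\f{2}{n}$ suffices to identify the NNF, which rests on the uniqueness of the multilinear representative (a standard fact, cited as \cite{Ca} in the paper) together with the elementary remark that $\sum_{T}\prod_{i\in E(T)}(1-2x_i)$ is already multilinear. If one prefers to avoid invoking uniqueness, an alternative is to expand $\prod_{i\in E(T)}(1-2x_i) = \sum_{S\subseteq E(T)} (-2)^{|S|} \w{x}^{\w{v}_S}$ and compare coefficients directly with the general NNF formula $\lambda_{\w{y}} = (-1)^{\wt(\w{y})}\sum_{\w{z}\preceq\w{y}} (-1)^{\wt(\w{z})} \widehat{f_G}(\w{z})$, but this is more computation than the problem warrants.
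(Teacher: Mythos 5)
Your proof is correct and follows essentially the same route as the paper's: expand $(-1)^{\w{v}_T\cdot\w{x}}$ as $\prod_{i\in E(T)}(-1)^{x_i}$ and substitute $1-2x_i$ for $(-1)^{x_i}$. The extra paragraph justifying that the resulting multilinear polynomial is indeed the NNF (via uniqueness of the multilinear representative) is a welcome bit of rigor that the paper leaves implicit, but it does not change the argument.
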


\begin{proof} The result follows from the Fourier-Hadamard transform formula by using that $(-1)^{x_i} = 1-2x_i$ for any $x_i\in\f{2}{}$. Indeed,
$$
FH_G(\w{x}) =\widehat{f}_{G}(\w{x}) = \sum_{\w{t}\in S_{\mathcal{T}_G}} (-1)^{\w{x}\cdot\w{t}} = \sum_{T\in \mathcal{T}_G} \left(\prod_{i\in E(T)} (-1)^{x_i} \right) = \sum_{T\in \mathcal{T}_G} \left(\prod_{i\in E(T)} (1-2x_i) \right).
$$
\end{proof}

We now propose a recursive method to calculate $FH_G(\w{x})$. Recall that the graph obtained by deleting the edge $e$ from $G$ is denoted by $G\setminus e$ and that $e=(uv)$ is {\em contracted} if its endpoints are identified as a single vertex and the edge $e$ is deleted. The resulting graph is denoted by $G / e$.  An {\em isthmus} is an edge whose deletion increases the graph's number of connected components. Finally, a {\em loop} is an edge that connects a vertex to itself.

\begin{lemma} Let $G$ be a graph consisting of one edge, say $e$. Then,
$$FH_G(x_e)=\left\{\begin{array}{ll}
1-2x_e & \text{ if } e \text{ is an isthmus},\\
1 & \text{ if } e \text{ is a loop}.\\
\end{array} \right.$$
\end{lemma}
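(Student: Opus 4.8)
The plan is to compute $FH_G(x_e)$ directly from the explicit formula in Proposition \ref{baseform}, by enumerating the spanning trees of a one-edge graph $G$ in the two possible cases. Since $G$ has a single edge $e$ and at most two vertices, the combinatorics of $\mathcal{T}_G$ is trivial, so this is really just a matter of unwinding definitions.

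First I would handle the case where $e$ is an isthmus. Then $G$ consists of two distinct vertices joined by $e$, so $G$ itself is a tree, and hence $\mathcal{T}_G = \{G\}$ has exactly one element, namely the spanning tree $T$ with $E(T) = \{e\}$. Plugging into Proposition \ref{baseform} gives
$$FH_G(x_e) = \sum_{T\in\mathcal{T}_G}\left(\prod_{i\in E(T)}(1-2x_i)\right) = 1-2x_e,$$
as claimed.

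Next I would handle the case where $e$ is a loop. Then $G$ consists of a single vertex with the loop $e$ attached. A spanning tree of $G$ must be a spanning connected acyclic subgraph; since the only vertex is already spanned and including $e$ would create a cycle, the unique spanning tree $T$ is the one with $E(T) = \emptyset$. The empty product is $1$, so Proposition \ref{baseform} yields
$$FH_G(x_e) = \sum_{T\in\mathcal{T}_G}\left(\prod_{i\in E(T)}(1-2x_i)\right) = \prod_{i\in\emptyset}(1-2x_i) = 1,$$
again as claimed. Since these two cases ($e$ an isthmus, $e$ a loop) exhaust all possibilities for a graph with a single edge, the proof is complete.

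I do not expect any real obstacle here; the only point requiring a moment's care is making sure the notion of spanning tree is applied correctly to the degenerate one-vertex-with-a-loop graph, so that the unique spanning tree has no edges and the corresponding term in the sum is the empty product $1$ rather than being omitted or taken to be $0$.
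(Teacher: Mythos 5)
Your proof is correct and follows essentially the same route as the paper: enumerate the spanning trees of the one-edge graph and evaluate via Proposition \ref{baseform}. The isthmus case is identical to the paper's. In the loop case your treatment is actually the more careful one: you take the unique spanning tree of the single-vertex graph to be the edgeless tree, whose term in the sum is the empty product and hence equals $1$, which is exactly what is needed to obtain $FH_G(x_e)=1$. The paper instead asserts that there is no spanning tree and that $f_G=\mathbf{0}$; read literally this would make $\widehat{f_G}$ an empty sum, i.e.\ $0$, contradicting the stated value. The intended reading must be that $\support(f_G)=\{\mathbf{0}\}$, the characteristic vector of the edgeless tree --- precisely the convention you adopt --- so your version quietly repairs that slip. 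The point you flag as the only delicate one (not omitting the empty tree, and evaluating its product as $1$ rather than $0$) is indeed the crux.
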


\begin{proof}
If $e$ is an isthmus, then there is only one spanning tree consisting of edge $e$, and the result follows by Proposition \ref{baseform}. If $e$ is a loop then there si not spanning tree and thus $f_{G} = \w{0}$ and the result follows by definition of $FH_G$ polynomial.
 \end{proof}

 \begin{proposition}\label{recformula1} Let $G$ be a graph and let $e$ be either a loop or an isthmus of $G$. Then,
 $$
 FH_G(\w{x}) = FH_G(x_e)\ FH_{G\setminus e}(\w{x'})
 $$
 where $\w{x'}$ is obtained from $\w{x}$ from which the entry $x_e$ corresponding to edge $e$ is deleted.
\end{proposition}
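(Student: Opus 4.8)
The plan is to reduce to the two base cases already established in the preceding lemma and exploit the product structure of the $FH_G$-polynomial coming from Proposition \ref{baseform}. Concretely, I would start from the explicit formula $FH_G(\w{x}) = \sum_{T\in\mathcal{T}_G} \prod_{i\in E(T)} (1-2x_i)$ and analyze how the spanning trees of $G$ relate to those of $G\setminus e$ depending on whether $e$ is an isthmus or a loop.

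First I would treat the loop case. If $e$ is a loop, then $e$ belongs to no spanning tree of $G$, so $\mathcal{T}_G = \mathcal{T}_{G\setminus e}$ under the obvious identification, and hence $FH_G(\w{x}) = FH_{G\setminus e}(\w{x'})$ directly from Proposition \ref{baseform}; since $FH_G(x_e) = 1$ in this case, the claimed identity holds. Next I would treat the isthmus case. If $e$ is an isthmus, then $e$ belongs to every spanning tree of $G$, and the map $T \mapsto T\setminus e$ is a bijection between $\mathcal{T}_G$ and $\mathcal{T}_{G\setminus e}$ (note $G\setminus e$ has one more connected component, but a spanning tree of a disconnected graph is a spanning forest with one tree per component, so this is consistent). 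Therefore
$$
FH_G(\w{x}) = \sum_{T\in\mathcal{T}_G} \prod_{i\in E(T)}(1-2x_i) = (1-2x_e)\sum_{T'\in\mathcal{T}_{G\setminus e}} \prod_{i\in E(T')}(1-2x_i) = (1-2x_e)\, FH_{G\setminus e}(\w{x'}),
$$
and since $FH_G(x_e) = 1-2x_e$ for an isthmus, this is exactly the asserted factorization.

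I expect the only delicate point to be making sure the bookkeeping around disconnected graphs is coherent: once $G$ is allowed to be disconnected (as happens after deleting an isthmus), one must fix the convention that $FH_H$ and $\mathcal{T}_H$ refer to spanning forests having exactly one tree in each connected component, and that $f_H$ is defined with support the set of characteristic vectors of such forests. With that convention in place the same proof of Proposition \ref{baseform} goes through verbatim (the Fourier--Hadamard transform argument does not use connectivity), so no new work is needed there. The rest is the routine bijective counting above, so I would keep the write-up short.
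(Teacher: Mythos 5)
Your proof is correct and follows essentially the same route as the paper: both cases are handled via the explicit spanning-tree expansion of Proposition \ref{baseform}, noting that a loop lies in no spanning tree and an isthmus lies in every one. Your explicit remark about the spanning-forest convention for the disconnected graph $G\setminus e$ in the isthmus case is a detail the paper's own (very terse) proof leaves implicit, but it does not change the argument.
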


\begin{proof}
If $e$ is a loop, then it is not in any spanning tree, and so the set of spanning trees is the same for both $G$ and $G\setminus e$, implying thus that the polynomial is the same. If $e$ is an isthmus, then $e$ is in every base, and the result follows by Proposition \ref{baseform}.
 \end{proof}

 \begin{proposition}\label{recformula}  Let $G$ be a graph and let $e$ be an edge neither a loop nor an isthmus.
 Then,
 $$
 FH_G(\w{x}) = FH_{G\setminus e}(\w{x'}) + (1-2x_e)FH_{G/e}(\w{x'})
 $$ 
  where $\w{x'}$ is obtained from $\w{x}$ from which the entry $x_e$ corresponding to edge $e$ is deleted.
 \end{proposition}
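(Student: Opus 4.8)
The plan is to mimic the classical deletion–contraction proof for the Tutte polynomial, using the explicit formula from Proposition~\ref{baseform} as the bridge. The key combinatorial fact is the standard bijective partition of $\mathcal{T}_G$ according to whether a fixed non-loop, non-isthmus edge $e$ belongs to the spanning tree: the spanning trees of $G$ containing $e$ are in natural bijection with the spanning trees of $G/e$ (contract $e$ in the tree), while the spanning trees of $G$ not containing $e$ are exactly the spanning trees of $G\setminus e$. This split is valid precisely because $e$ is neither a loop (so $e$ can appear in a spanning tree and $G/e$ is still connected) nor an isthmus (so $G\setminus e$ is still connected and has spanning trees).

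First I would write $FH_G(\w{x}) = \sum_{T\in\mathcal{T}_G}\prod_{i\in E(T)}(1-2x_i)$ by Proposition~\ref{baseform}, and split the sum as $\sum_{T\not\ni e} + \sum_{T\ni e}$. For the first sum, each $T$ with $e\notin E(T)$ is a spanning tree of $G\setminus e$ and conversely, and the product $\prod_{i\in E(T)}(1-2x_i)$ only involves edges $i\neq e$, so this sum is exactly $FH_{G\setminus e}(\w{x'})$ by applying Proposition~\ref{baseform} to $G\setminus e$. For the second sum, each $T$ with $e\in E(T)$ corresponds to a spanning tree $T/e$ of $G/e$; the product factors as $(1-2x_e)\prod_{i\in E(T),\, i\neq e}(1-2x_i) = (1-2x_e)\prod_{j\in E(T/e)}(1-2x_j)$, and summing over all such $T$ gives $(1-2x_e)\,FH_{G/e}(\w{x'})$ by Proposition~\ref{baseform} applied to $G/e$. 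Adding the two pieces yields the claimed identity.

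The one point requiring a little care — and the main (mild) obstacle — is the bookkeeping of edge labels: $G\setminus e$ and $G/e$ each have $n-1$ edges, indexed by $E\setminus\{e\}$, which is precisely the index set of the truncated vector $\w{x'}$, so the matching of variables to edges is consistent; and one should note that contraction does not change the edge set other than removing $e$ (parallel edges or new loops created by contracting $e$ are still legitimate edges of $G/e$, and Proposition~\ref{baseform} applies verbatim to multigraphs). There is no genuine difficulty here since Proposition~\ref{baseform} already holds for arbitrary graphs with multiple edges and loops; the recursion is then just the edge-partition of the spanning-tree set repackaged through that formula.
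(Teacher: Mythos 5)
Your proposal is correct and follows essentially the same route as the paper: both split $\mathcal{T}_G$ into trees containing $e$ and trees avoiding $e$, identify these with $\mathcal{T}_{G/e}$ and $\mathcal{T}_{G\setminus e}$ respectively (using that $e$ is neither a loop nor an isthmus), and factor out $(1-2x_e)$ via Proposition~\ref{baseform}. Your extra remarks on variable bookkeeping and multigraphs are a welcome touch of care but not a departure from the paper's argument.
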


 \begin{proof} Let $e$ be an edge in $G$.
 Let $\mathcal{T}_G^{\text{ w/o } e}$ (resp. $\mathcal{T}_G^e$) be the set of spanning trees in $G$ not containing $e$ (resp. containing $e$). Since $e$ is neither a loop nor an isthmus then the set of spanning trees in $G\setminus e$ is given by $\mathcal{T}_G^{\text{ w/o}\  e}$ while an spanning tree $T$ of $G / e$ is  of the form $T\setminus e$ with $e\in T\in\mathcal{T}_G$. By Proposition \ref{baseform}, we have
 $$\begin{array}{ll}
 FH_G(\w{x}) & = \sum\limits_{T\in\mathcal{T}_G}\prod\limits_{i\in T}(1-2x_i) \\
 & = \sum\limits_{T\in\mathcal{T}_G^{\text{ w/o } e}}\prod\limits_{i\in T}(1-2x_i) + \sum\limits_{T\in\mathcal{T}_G^{e}}\prod\limits_{i\in T}(1-2x_i)\\
 &= FH_{G\setminus e}(\w{x}) + (1-2x_e)\sum\limits_{T\in\mathcal{T}_G^e}\prod\limits_{\substack{i\in T \\ i \neq e}}(1-2x_i)\\
 & = FH_{G\setminus e}(\w{x}) + (1-2x_e) FH_{G/e}(\w{x}).
 \end{array}$$
 \end{proof}

\begin{example} Let $G$ be the graph given on the top of Figure \ref{figFH}. 

\begin{figure}[H]
\centering
\includegraphics[width=1\linewidth]{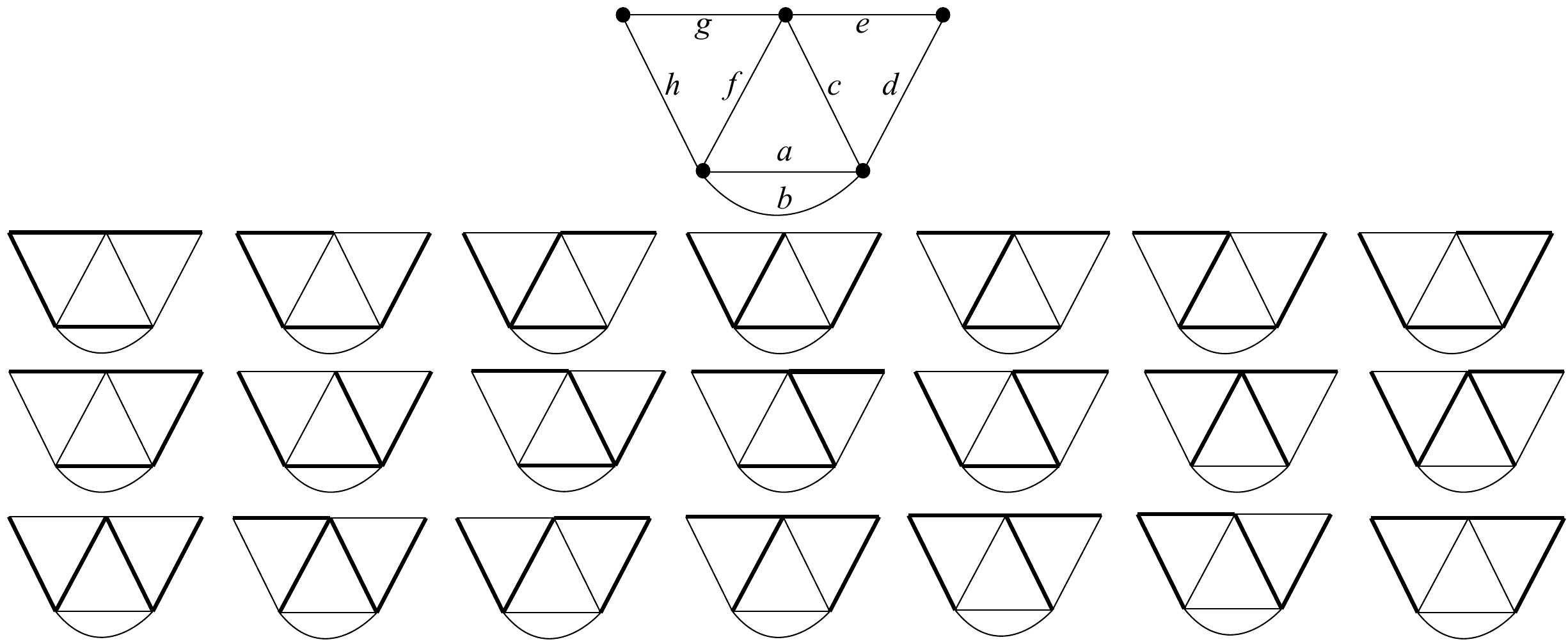}
\caption{Graph $G$ and 21 spanning trees (the other 12 can be obtained from the first 12 trees by replacing edge $a$ by edge $b$).}\label{figFH}
\end{figure}

By Proposition \ref{baseform}, we have,

{\tiny
 $$\begin{array}{ll}
 FH_G(a, b, c, d, e, f, g, h)&=(1-2a)(1-2h)(1-2g)(1-2e)+(1-2a)(1-2h)(1-2g)(1-2d)+(1-2a)(1-2h)(1-2f)(1-2d)\\
&+(1-2a)(1-2h)(1-2f)(1-2e)+(1-2a)(1-2f)(1-2g)(1-2d)+(1-2a)(1-2f)(1-2g)(1-2e)\\
&+(1-2a)(1-2d)(1-2e)(1-2h)+(1-2a)(1-2d)(1-2e)(1-2g)+(1-2a)(1-2d)(1-2c)(1-2h)\\
&+(1-2a)(1-2d)(1-2c)(1-2g)+(1-2a)(1-2c)(1-2e)(1-2g)+(1-2a)(1-2c)(1-2e)(1-2h)\\
&+(1-2b)(1-2h)(1-2g)(1-2e)+(1-2b)(1-2h)(1-2g)(1-2d)+(1-2b)(1-2h)(1-2f)(1-2d)\\
&+(1-2b)(1-2h)(1-2f)(1-2e)+(1-2b)(1-2f)(1-2g)(1-2d)+(1-2b)(1-2f)(1-2g)(1-2e)\\
&+(1-2b)(1-2d)(1-2e)(1-2h)+(1-2b)(1-2d)(1-2e)(1-2g)+(1-2b)(1-2d)(1-2c)(1-2h)\\
&+(1-2b)(1-2d)(1-2c)(1-2g)+(1-2b)(1-2c)(1-2e)(1-2g)+(1-2b)(1-2c)(1-2e)(1-2h)\\
&+(1-2c)(1-2f)(1-2e)(1-2g)+(1-2c)(1-2f)(1-2e)(1-2h)+(1-2c)(1-2f)(1-2d)(1-2g)\\
&+(1-2c)(1-2f)(1-2d)(1-2h)+(1-2f)(1-2h)(1-2e)(1-2d)+(1-2f)(1-2h)(1-2e)(1-2d)\\
&+(1-2c)(1-2e)(1-2g)(1-2h)+(1-2c)(1-2d)(1-2g)(1-2h)+(1-2h)(1-2g)(1-2e)(1-2d).
\end{array}$$
 }

If we take the edge-signature $\chi(a)=\chi(b)=\chi(c)=\chi(d)=\chi(e)=\chi(f)=\chi(g)=\chi(h)=+1$ then $FH_G(0,0,0,0,0,0,0,0)=33$ which corresponds to the number of spanning trees of $G$ and thus giving the determinant of the corresponding alternating link.  Let us now consider the edge-signature $\chi(a)=\chi(b)=-$ and $\chi(c)=\chi(d)=\chi(e)=\chi(f)=\chi(g)=\chi(h)=+$. The corresponding non-alternating link is given in figure \ref{figsign}. 

\begin{figure}[H]
\centering
\includegraphics[width=.3\linewidth]{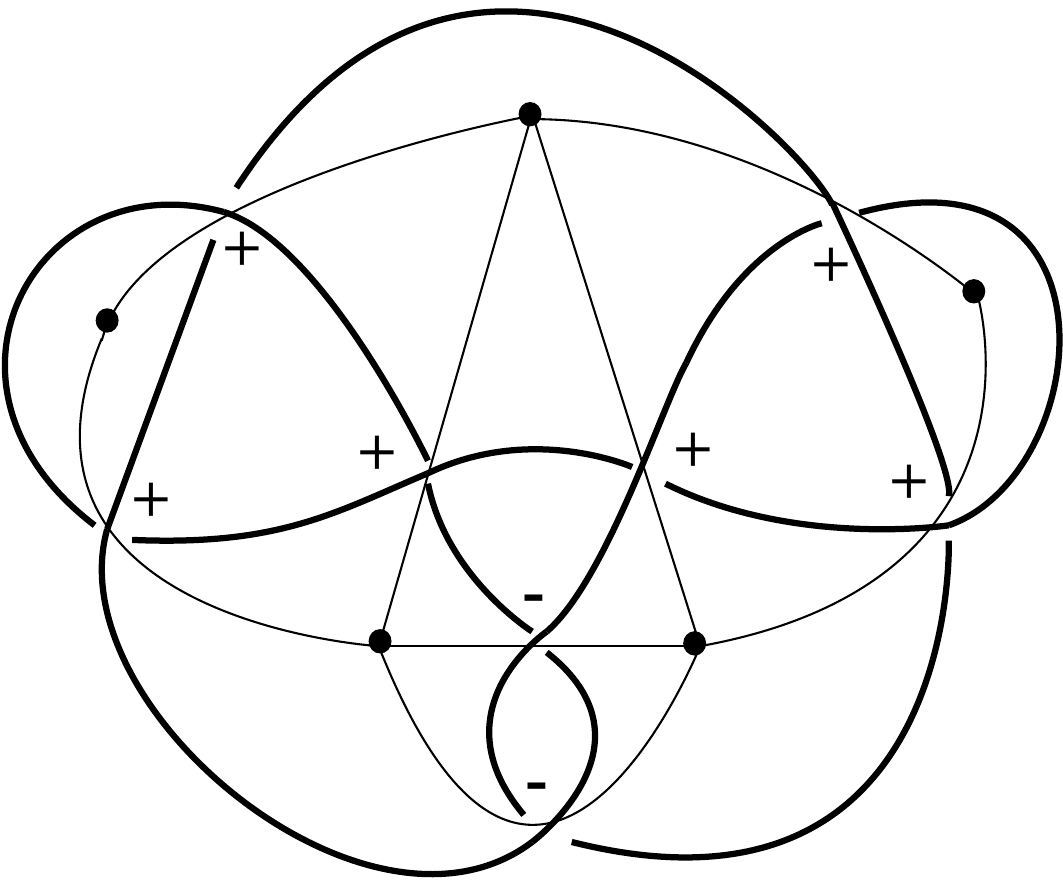}
\caption{Knot $8_{21}$ (in bold) and its Tait's graph (light edges).}\label{figsign}
\end{figure}

In this case  $\big|  FH_G(1,1,0,0,0,0,0,0) \big|  =15=\dett(8_{21})$. We notice that the latter corresponds to the difference between the 24 negative-spanning trees (those containing either edge $a$ or $b$) and the 9 positive-spanning trees (the last 9 trees in Figure \ref{figFH}). 

Finally, the set of the determinants, obtained by one of the $2^8$ edge-signatures, is given in Table \ref{table:det}.

\begin{table}[H]
\begin{tabular}{c|c}
\hline
$\dett(L)$ &  \# of edge-signatures \\
\hline
1 & 46\\
3 & 44\\
5 & 14\\
7 & 7\\
9 & 9\\
11 & 2\\
13 & 2\\
15 & 1\\
19 & 2\\
33 & 1\\
\end{tabular}

\caption{The determinants obtained from the 128 vectors beginning with 1 (the other 128 vectors, beginning with 0, yields to exactly the same set of determinants).}\label{table:det}
\end{table}
\end{example}

\section{Centrally symmetric links}\label{sec:symmetry}

We quickly recall some background on maps needed for the proof of Theorem \ref{th:det:sym}, see \cite{MRR} for further details.

\subsection{Maps background} 
A {\em map} of $G=(V,E,F)$ is the image of an embedding of $G$ into $\mathbb{S}^2$ where the set of vertices are a collection of distinct points in $\mathbb{S}^2$ and the set of edges are a collection of Jordan curves joining two points in $V$ satisfying $\beta\cap\beta'$ is either empty or a point in the endpoints for any pair of Jordan curves $\beta$ and $\beta'$.  Any embedding of the topological realization of $G$ into $\mathbb{S}^2$ partitions the 2-sphere into simply connected regions of $\mathbb{S}^2\setminus G$ called the {\em faces} $F$ of the embedding. A map $G$ is called {\em self-dual} if there is a bijection from $V$ and $F$ to $V$ and $F$ which reverses inclusion.

Let 
\[
\begin{array}{llcl}
\alpha: & \mathbb{S}^2\subset \rere^3& \rightarrow & \mathbb{S}^2\subset\rere^3\\ 
& x & \mapsto & -x
\end{array}
\]
be  the {\em antipodal} function.

Notice that $\alpha$ is a homeomorphism of $\mathbb{S}^2$ into itself without fixed points. We say that $Y\subseteq\mathbb{S}^2$ is {\em antipodally symmetric} if $\alpha(Y)=Y$. We say that $G$ is an {\em antipodally symmetric} map if it admits an embedding in $\stw$ such that $\alpha(G)=G$.
If $G$ is an antipodally symmetric map and $v\in V(G)$ then its {\em antipodal} vertex is given by $\alpha(v)=-v$. We call $(v,-v)$ {\em antipodal pair} of vertices. 

\begin{remark}\label{rem:2-aut} If $G$ is an antipodally symmetric map then its number of faces must be even. Moreover, the function $\alpha$ naturally matches the pairs of {\em antipodal faces}, say $f$ and $\alpha(f)$ (we may refer to $\alpha(f)$ as {\em $f$-antipodal}). This pairing is a permutation of the faces that turns out to be an automorphism of $G^*$ (that is, $\alpha\in Aut(G)$, and thus $G^*$ is also antipodally symmetric). 
\end{remark}

\begin{proposition} \cite[Proposition 1]{MRR} Let $G$ be an antipodally symmetric map where its faces are 2-colored properly (that is, two faces sharing an edge have different colors). Then, if one pair of antipodal faces has the same (resp. different color) then all pairs of antipodal faces have the same (resp. different color). 
\end{proposition}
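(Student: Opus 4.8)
The plan is to argue by a two-coloring argument combined with a parity (connectivity) invariant of proper face-colorings of maps on $\mathbb{S}^2$. First I would recall the basic setup: a map $G$ on $\mathbb{S}^2$ with a proper $2$-coloring of its faces into black and white, and the antipodal map $\alpha$ which, by Remark \ref{rem:2-aut}, is a fixed-point-free homeomorphism of $\mathbb{S}^2$ permuting faces in antipodal pairs and inducing an automorphism of $G^*$. Since $\alpha$ is a homeomorphism, it maps every edge of $G$ to an edge of $G$ and preserves the relation ``two faces share an edge''. Thus $\alpha$ either preserves the $2$-coloring or swaps the two colors: indeed, the composition $\chi \circ \alpha$ (where $\chi$ is the coloring, viewed as a map from faces to $\{B,W\}$) is again a proper $2$-coloring of $G$, and on a connected map the proper $2$-coloring of faces is unique up to swapping the two colors.

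The key step is then to connect the local statement (``one antipodal pair is monochromatic'') to the global one (``all antipodal pairs are monochromatic''). I would phrase this as follows. Suppose some antipodal pair $(f,\alpha(f))$ has the same color. Then $\chi(\alpha(f)) = \chi(f)$, so the proper $2$-coloring $\chi \circ \alpha$ agrees with $\chi$ on the face $f$. Since the dual graph $G^*$ is connected (as $G$ is connected), and since two proper $2$-colorings of a connected graph that agree on one vertex agree everywhere, we conclude $\chi \circ \alpha = \chi$ as colorings, i.e. $\chi(\alpha(g)) = \chi(g)$ for every face $g$. That is exactly the statement that every antipodal pair is monochromatic. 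Conversely, if some antipodal pair $(f,\alpha(f))$ has different colors, then $\chi \circ \alpha$ disagrees with $\chi$ at $f$; by the same uniqueness-up-to-swap fact, $\chi \circ \alpha$ must be the color-swapped version of $\chi$, hence $\chi(\alpha(g)) \ne \chi(g)$ for every face $g$, so every antipodal pair is bichromatic. Since ``same color'' and ``different color'' are mutually exclusive and cover all cases, this proves the dichotomy.

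I expect the main (and essentially only) obstacle to be making precise and justifying the assertion that a connected map on $\mathbb{S}^2$ has a unique proper face-$2$-coloring up to swapping colors, and that $\alpha$ carrying the $2$-coloring $\chi$ to another proper $2$-coloring is legitimate. The first is a standard fact: the faces of a connected planar map, with adjacency across edges, form a connected graph (the dual $G^*$, which is connected because $G$ is), and a connected bipartite graph has exactly two proper $2$-colorings, one the swap of the other — while the hypothesis that a proper $2$-coloring exists is built into the statement (``its faces are 2-colored properly''). The second is immediate from $\alpha$ being a homeomorphism of the sphere: it sends faces to faces, edges to edges, and an edge-sharing between two faces to an edge-sharing between their images, so $\chi \circ \alpha^{-1}$ (equivalently $\chi \circ \alpha$, as $\alpha = \alpha^{-1}$) is again proper. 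Once these two points are in place, the proof is just the short color-propagation argument above; no computation is needed.
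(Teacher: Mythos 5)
Your proof is correct. The paper itself does not reprove this statement --- it is quoted from \cite[Proposition 1]{MRR} --- but your argument is a complete and self-contained justification: the faces with the ``share an edge'' relation form the connected dual graph $G^*$, a connected graph admits at most two proper $2$-colorings (each the swap of the other), and $\alpha$, being a homeomorphism preserving $G$, carries the given proper coloring $\chi$ to another proper coloring $\chi\circ\alpha$, which must therefore be either $\chi$ itself (all antipodal pairs monochromatic) or its swap (all antipodal pairs bichromatic), with the two cases distinguished by any single antipodal pair. The only hypotheses you rely on --- connectedness of $G$ (hence of $G^*$) and the existence of the proper $2$-coloring --- are part of the setup, so there is no gap.
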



 
We denote by $(G,C_F,\chi_V)$ the a {\em face-colored} $C_F:F\rightarrow \{black, white\}$ and {\em vertex-signed} $\chi_V:V\rightarrow \{+,-\}$ map $G$. We denote by $\chi_V^+$ the vertex signatures where all the signs are the same.
We say that an automorphism $\sigma(G)\in Aut(G)$ is \textit{color-preserving} (resp. \textit{color-reversing}) if each pair of faces $f$ and $\sigma(f)$ have the same (resp. different) color. Similarly, $\sigma$ is said to be \textit{sign-preserving} (resp. \textit{sign-reversing}) if each pair of vertices $v$ and $\sigma(v)$ have the same (resp. different) sign.

\begin{remark}\label{re;anti} (a) In the case when $(G,C_F,\chi_V)$ is an antipodally symmetric map, the automorphism $\alpha$ can be either color-preserving or color-reversing, see Figure \ref{fig14}.

\begin{figure}[H]
\centering
\includegraphics[width=.5\linewidth]{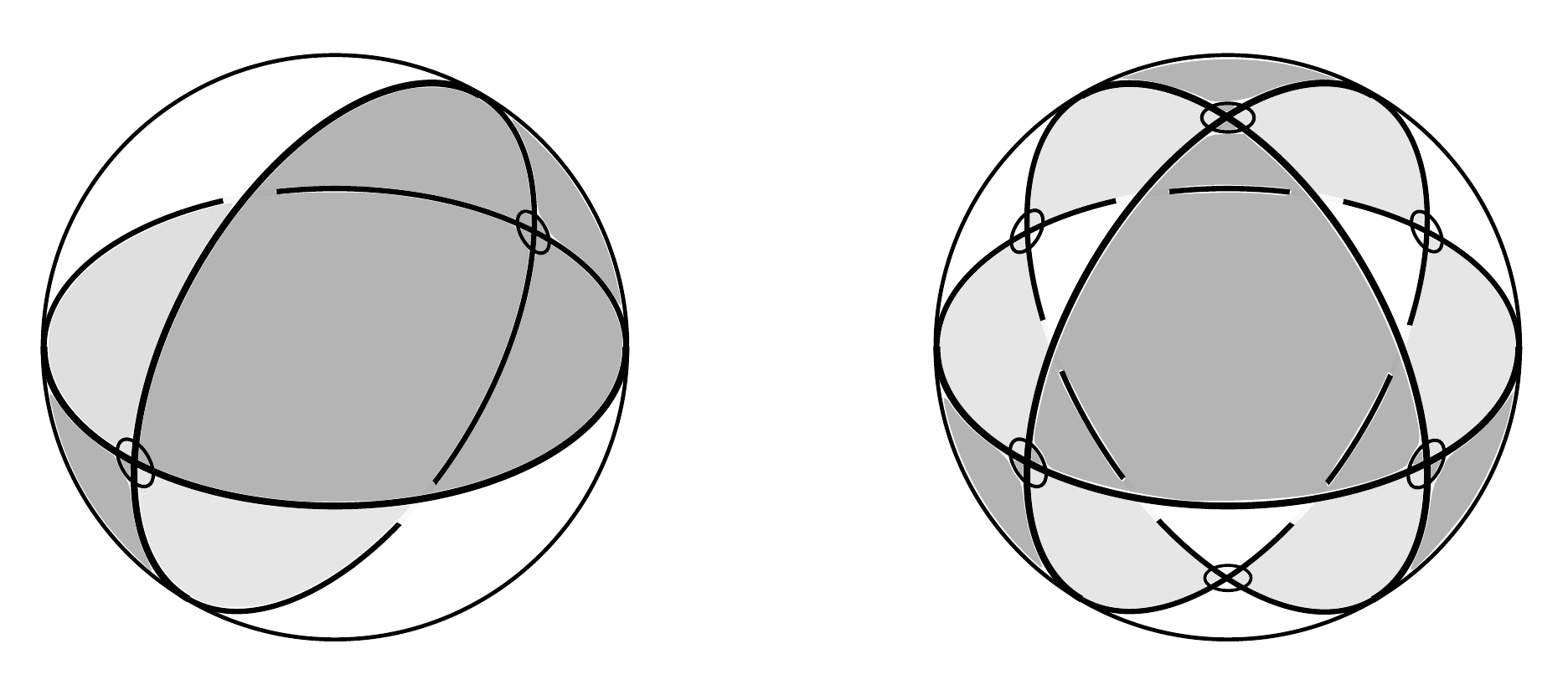}
\caption{(Left) An antipodally symmetric map where $\alpha$ is color-preserving. (Right) An antipodally symmetric map where the $\alpha$ is color-reversing.}\label{fig14}
\end{figure}

(b) If $\alpha$ is color-preserving then the number of black faces (and white faces) is even. Indeed, suppose that there is a black face $F$ such that $\alpha(F)=F$. Since any face of the diagram has a connected boundary then it is homeomorphic to a disc ($F$ could be a band with borders of $\stw$ which is also homeomorphic to a disc). Therefore, $\alpha$ sends a disc to itself and, by Brouwer's theorem, $\alpha$ would have a fixed point which is a contradiction since the antipodal function $\alpha$ is fixed-point free. 
\end{remark}

\subsection{Special embedding construction}
Let $(G,\chi_E)$ be an edge-signed map. We have that the map $(med(G), C_F, \chi_V)$ (where $\chi_V$ is induced by $\chi_E$) determines, in a canonical way, a diagram $D(G,\chi_E)$ of a link $L$.
\smallskip

We shall construct a specific embedding of $L$ in $\rth$, denoted by $\embed(G,\chi_E)$, by modifying (locally) $D(G,\chi_E)$ around each crossing as follows. For each vertex of $med(G)$, let $S_v$ be the sphere centered at $v$ and radius small enough to intersect only the edges incident to $v$ (these four points close to $v$). Then, replace  the piece of arc of the diagram passing over (resp. passing under) by the half meridians of $S_v$ according to the {\em crossing sphere rules}, see Figure \ref{crossing-spheres}. 

 \begin{figure}[H]
    \centering
    \includegraphics[width=.8\textwidth]{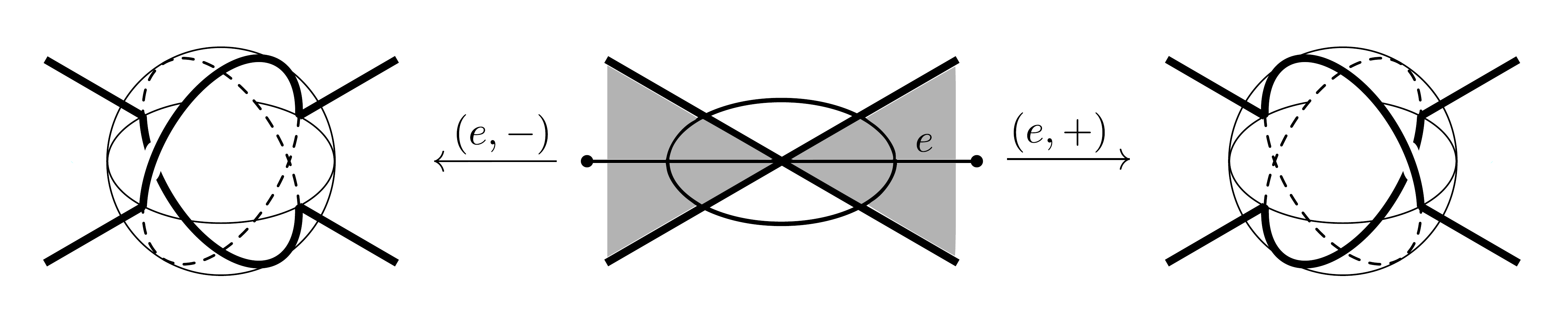}
    \caption{The crossing spheres rules.}
    \label{crossing-spheres}
\end{figure} 

The rest of the diagram $D(G,\chi_E)$ remains the same in $\mathbb{S}^2$; see Figure \ref{fig16}.

\begin{figure}[H]
\centering
\includegraphics[width=.5\linewidth]{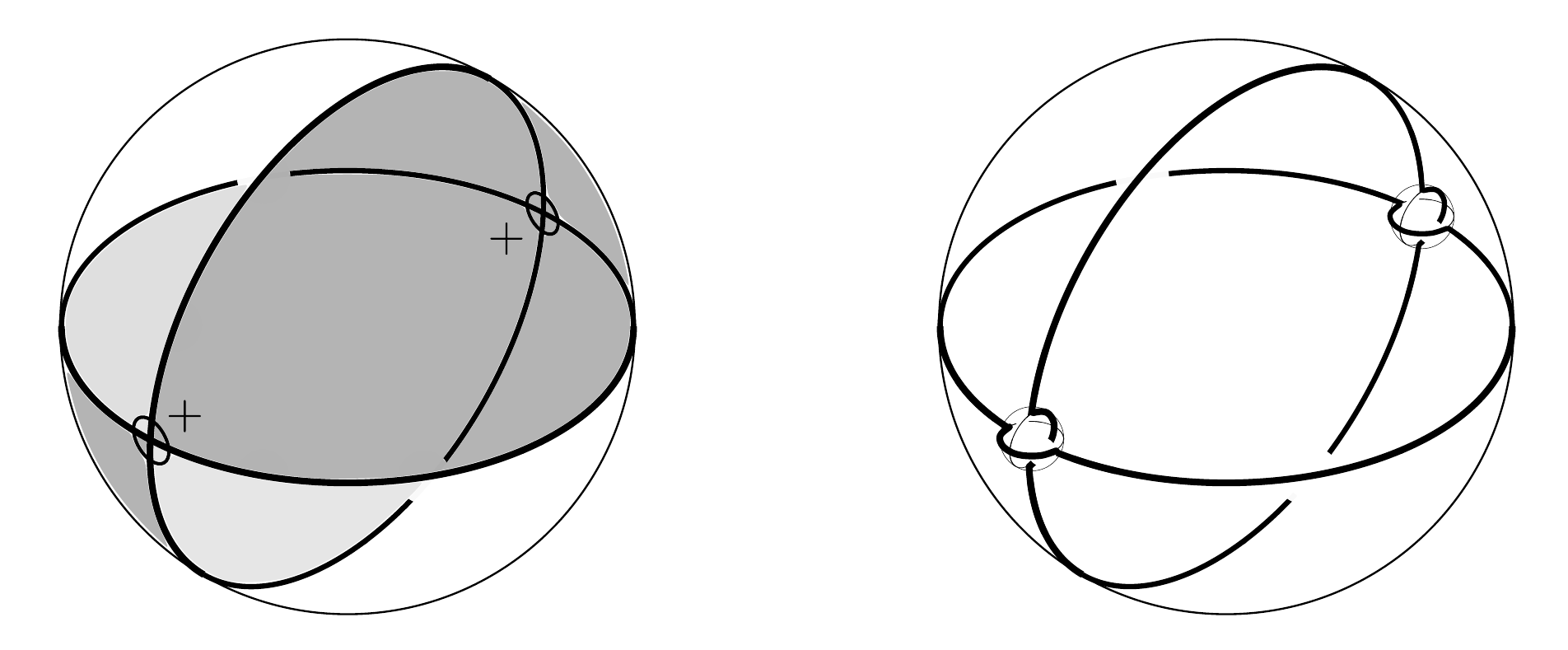}
\caption{(Left) An antipodally symmetric map of the graph $H$ on 2 vertices and four parallel edges. This map corresponds to a shadow of the Hopf link $2_1$, see Figure \ref{fig4}. In this case, $\alpha$ is color-preserving and sign-preserving. (Right) The embedding $\embed(2_1)$.}
\label{fig16}
\end{figure}

\subsection{Characterization of centrally symmetric links}

We may present a combinatorial characterization of centrally symmetric links.

\begin{theorem} \label{thm:centralsymm} 
		 A link $L$ is centrally symmetric if and only if
		 there is an edge-signed map $(G,\chi_E)$ in $\mathbb S^2$ satisfying the following conditions:
		 \begin{enumerate}
		 	\item The map $(med(G), C_F, \chi_V)$ determines a diagram $D(G,\chi_E)$ representing $L$.
		 	\item $med(G)$ is antipodally symmetric in $\mathbb S^2$ (say, realized by $\alpha$).
		 	\item $\alpha$ is either color-preserving and sign-reversing  or color-reversing and sign-preserving.
		 \end{enumerate}
\end{theorem}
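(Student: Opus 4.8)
The plan is to establish the two implications separately, in both cases working with the explicit embedding construction $\embed(G,\chi_E)$ described above, and exploiting the fact that the central symmetry $c$ of $\rth$ restricts to the antipodal map $\alpha$ on the distinguished sphere $\stw$ carrying (most of) the diagram. The key geometric observation is that the crossing-sphere rules are designed precisely so that $c$ interchanges the over- and under-strands at antipodal crossings in a manner compatible with the sign of the corresponding vertex of $med(G)$: near a crossing, the two half-meridians of $S_v$ that replace the over/under arcs are swapped by the central symmetry of the ambient $\rth$, and whether this swap preserves the crossing type (positive/negative) or reverses it is governed by the sign $\chi_V(v)$ together with the colors of the two faces meeting at $v$.

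For the ``if'' direction, suppose $(G,\chi_E)$ satisfies (1)--(3). I would take the embedding $\embed(G,\chi_E)\subset\rth$, scaled so that its spherical part lies on the unit $\stw$. Condition (2) gives $\alpha(med(G))=med(G)$ as a map in $\stw$, and I would check that this extends to $c(\embed(G,\chi_E))=\embed(G,\chi_E)$ as a subset of $\rth$: on the spherical part this is just $\alpha$, and on each crossing sphere $S_v$ the central symmetry maps $S_v$ to $S_{\alpha(v)}=S_{-v}$ isometrically. The content is then that the over/under data matches up, i.e.\ that $c$ really sends the link to itself and not to its mirror: this is exactly where condition (3) enters. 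When $\alpha$ is color-preserving, the left-over-right vs.\ right-over-left rule at $v$ (read from the black side) is read from the \emph{same}-colored side at $\alpha(v)$, so to get the same crossing resolution we need the sign to flip — hence ``color-preserving and sign-reversing''; when $\alpha$ is color-reversing, the rule is read from the opposite side at $\alpha(v)$, which already flips the convention, so we need the sign \emph{not} to flip — hence ``color-reversing and sign-preserving''. In either case $c(\embed(G,\chi_E))=\embed(G,\chi_E)$, so $L$ is centrally symmetric.

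For the ``only if'' direction, suppose $L$ is centrally symmetric, with a representative $\hat L$ in $\rth$ satisfying $c(\hat L)=\hat L$. Here the main work is to \emph{isotope} $\hat L$ into the normal form $\embed(G,\chi_E)$ while retaining the central symmetry throughout — one wants an equivariant version of the standard fact that every link has a diagram. I would first arrange (equivariantly, using an averaging/symmetrization argument on the isotopy, or by choosing a generic $c$-invariant projection direction) that $\hat L$ projects to a diagram $D$ in a plane through the origin, with $c$ inducing a symmetry of $D$; since $c$ is orientation-reversing on $\rth$ and its fixed set is the origin, the induced symmetry of the diagram is a central symmetry of the plane, i.e.\ the antipodal map of a projective picture, realizable as $\alpha$ on a sphere $\stw$. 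Then I would push the crossings onto small crossing spheres $S_v$ as in the construction, equivariantly (antipodal crossings get antipodal spheres), obtaining $\hat L=\embed(G,\chi_E)$ up to isotopy, with $med(G)$ antipodally symmetric — giving (1) and (2). Finally, reading off how $c$ acts on the over/under information at each antipodal pair of crossings, the same case analysis as above forces the color/sign dichotomy in (3).

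The step I expect to be the genuine obstacle is the \textbf{equivariant diagram existence} in the ``only if'' direction: turning an arbitrary $c$-invariant embedding into the standard crossing-sphere normal form \emph{without breaking the symmetry}. The subtleties are (i) choosing a $c$-invariant regular projection direction (a generic direction $u$ and $-u$ give the same projection up to the antipodal map, so genericity can be arranged symmetrically, but one must be careful that no strand of $\hat L$ is itself $c$-invariant and tangent to the projection — strands through the origin need separate handling), and (ii) performing the local modification near crossings equivariantly, which is fine for crossings off the fixed set since $c$ is free there, and the fixed set being a single point can be avoided by a small $c$-invariant perturbation. I would isolate this as a lemma (``every centrally symmetric link has a symmetric crossing-sphere diagram'') and prove it by the symmetrization argument sketched above; the rest of the theorem is then a bookkeeping of the crossing-sphere rules against Figure~\ref{fig2}.
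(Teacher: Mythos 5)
Your ``if'' direction is essentially the paper's argument: take the crossing-sphere embedding $\embed(G,\chi_E)$, observe that $c$ restricts to $\alpha$ on $\stw$ and maps $S_v$ isometrically to $S_{\alpha(v)}$, and check that condition (3) is exactly what makes the over/under half-meridians match up at antipodal crossings. That part is fine.

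The ``only if'' direction has a genuine gap, and it is located exactly where you predicted the obstacle would be. You propose to project $\hat L$ along a generic $c$-invariant direction onto a plane through the origin and assert that the induced symmetry of the resulting planar diagram is ``the antipodal map of a projective picture, realizable as $\alpha$ on a sphere $\stw$.'' This identification is false. The central symmetry $(x,y)\mapsto(-x,-y)$ of the plane extends to the one-point compactification $S^2$ as a rotation by $\pi$ about the axis through $0$ and $\infty$: it is orientation-preserving on the surface and has two fixed points, whereas the antipodal map $\alpha$ is orientation-reversing and free. These involutions are not conjugate, so your projection does not produce an antipodally symmetric medial map and cannot directly verify condition (2). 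There is a second, related mismatch: under a planar projection along $u$, the map $c$ negates the height coordinate, so it \emph{swaps} over and under at paired crossings, which scrambles the bookkeeping that condition (3) is supposed to capture. The fix --- and what the paper actually does --- is to use the \emph{radial} projection $p(x)=r(x)\cap\stw$ from the origin (the unique fixed point of $c$) onto a sphere centered there. Then $c$ restricts to precisely $\alpha$ on $\stw$, the projection of $\hat L$ is antipodally symmetric by construction, and over/under is encoded by distance from the origin, which $c$ preserves; one then regularizes the projection by $\alpha$-equivariant local perturbations and reads off the face-coloring and vertex signs, at which point the dichotomy in (3) is forced. (Note also that the origin cannot lie on $\hat L$, since $c$ restricted to a component would otherwise be an involution of a circle with exactly one fixed point, which is impossible; so the radial projection is well defined.) Your planar-projection route would require an additional, nontrivial argument converting a $\pi$-rotation-symmetric diagram with over/under swapped into an antipodally symmetric one, and as written that step is missing.
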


\begin{proof}  ({\em Sufficiency}) Let $(med(G),C_F,\chi_V)$ be an antipodally symmetric medial map realized by $\alpha$. We consider the embedding $\embed(G,\chi_E)$. It can be checked that if $\alpha$ is either color-preserving and sign-reversing  or color-reversing and sign-preserving then the piece of arc of the diagram passing over (resp. passing under) around vertex $v$ correspond to the piece of arc of the diagram passing over (resp. passing under) around the antipodal  vertex $\alpha(v)$, see Figure \ref{fig:2antisym}.

\begin{figure}[H]
\centering
\includegraphics[width=0.7\linewidth]{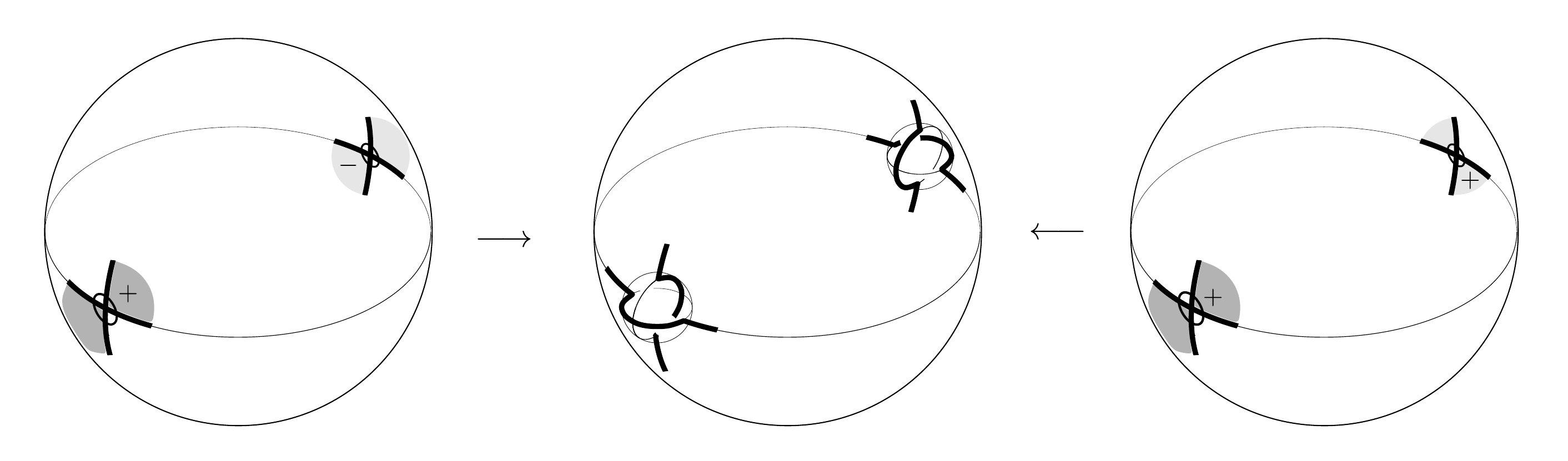}
\caption{(Left) Antipodal pair of vertices of $med(G)$ with $\alpha$ color-preserving and sign-reversing. (Right)  Antipodal pair of vertices of $med(G)$ with $\alpha$ color-reversing and sign-preserving. (Center) The local modifications around the antipodal pair of vertices following the color-sphere rules.}
\label{fig:2antisym}
\end{figure}

We thus have that  $\embed(G,\chi_E)$ is centrally symmetric. 
\smallskip

 ({\em Necessity}) Suppose that $L$ is centrally symmetric. Hence, $L$ admits an embedding, say  $\hat L$, in ${\mathbb R^3}$ with $c(\hat L)=\hat L$. We claim that there is $(med(G),C_F,\chi_V)$ for some map $(G,\chi_E)$ inducing a link isotopic to $\hat L$ and with the desired coloring and sign conditions. The latter can be done by using the same construction as the one used in the proof of \cite[Theorem 1]{MRR1}. The procedure goes as follows. First, $\hat L$ can be thought of as a special embedding  $\embed(G,\chi_E)$ for some edge-signed map $(G, \chi_E)$. For this, one may take the radial projection $p(\hat L)$ from $\hat L$ to $\stw$, that is, if we let $r(x)$ be the ray emitting from the origin passing through $x$ then

$$\begin{array}{lllc}
p: & \hat  L& \longrightarrow & \stw\\ 
& x & \mapsto & r(x)\cap\stw
\end{array}$$

Since $\hat L$ is centrally symmetric then $p(\hat  L)$ is clearly antipodally symmetric in $\stw$. We suppose that the projection $p(\hat  L)$ is regular in the sense that it avoids cusps and tangency points (this can be obtained by making some suitable local modifications to $p(\hat  L)$ done in a symmetric fashion in order to keep the symmetric antipodality of $p(\hat  L)$). A multiple intersection $y$ can be fixed by moving properly the piece of $p(\hat L)$ around $y$. We  end up with an antipodally symmetric projection without multiple points. This projection can be thus thought of as a 4-regular antipodally symmetric map (realized by $\alpha$) which, in turn, can be regarded as a medial map $med(G)$ for some map $G$. 
\smallskip

We notice that for each projected intersection $q$, we have the information what piece of $\hat L$ pass over/under the other the piece and symmetrically for $\alpha(q)$, keeping the central symmetry. Finally,  if we color the faces of $med(G)$ and sign its vertices (intersections) according with the latter information and respecting the crossing sphere rules then we have that the induced antipodally symmetric color-face vertex sign $(med(G),C_F,\chi_V)$ inducing a link isotopic to $L$. Moreover, the only way that the over/under crossing for each antipodal pair of vertices can satisfy the central symmetry is when $\alpha$ is either color-preserving and sign-reversing  or color-reversing and sign-preserving.
\end{proof}

\subsection{Proof of Theorem \ref{th:det:sym}.} We first need the following

\begin{lemma}\label{lem:parity} Let $(G,\chi_E)$ be an edge-signed map and suppose that $(med(G), C_F, \chi_V)$ is an antipodally symmetric map
(realized by $\alpha$). Let $D(L)$ be the link diagram determined by $(med(G), C_F, \chi_V)$. Then, the number of components of $D(L)$ is even if and only if $\alpha$ is color-preserving.
\end{lemma}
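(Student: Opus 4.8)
\emph{Approach and reduction.} I would count, modulo $2$, the number of transverse intersection points of a suitable arc with the shadow $\Gamma:=med(G)$, in two different ways, and compare. Regard the components $c_1,\dots,c_n$ of $D(L)$ as closed (possibly self‑intersecting) curves in $\stw$; their arcs partition the edge set of $\Gamma$. Since $\alpha$ is a fixed‑point‑free involution of $\stw$ preserving $\Gamma$, it acts on $\{c_1,\dots,c_n\}$ as an involution; writing $a$ for the number of $\alpha$‑invariant components and $b$ for the number of $2$‑element orbits, we get $n=a+2b$, so $n$ is even iff $a$ is even. Thus it suffices to show: $a$ is even iff $\alpha$ is color‑preserving. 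To that end fix $p$ in the interior of a face $f_p$ of $\Gamma$ and choose an embedded arc $\delta$ from $p$ to $\alpha(p)$ that is transverse to $\Gamma$, avoids every crossing of $\Gamma$, and meets $\alpha(\delta)$ only at its two endpoints (a half great circle works after an $\alpha$‑equivariant perturbation). Then $\ell:=\delta\cup\alpha(\delta)$ is an $\alpha$‑invariant embedded circle bounding two disks $D$ and $\alpha(D)$ (that $\alpha$ swaps them rather than fixing one follows from Brouwer, as in Remark~\ref{re;anti}(b)). Put $N:=|\delta\cap\Gamma|$.

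\emph{First count (colors).} Since $C_F$ is a proper $2$‑coloring, walking along $\delta$ from $f_p$ to $\alpha(f_p)$ flips the face color at each of the $N$ crossings, so $N$ is even iff $C_F(f_p)=C_F(\alpha(f_p))$. By \cite[Proposition~1]{MRR} this equality for our one antipodal pair is equivalent to its holding for every antipodal pair, i.e.\ to $\alpha$ being color‑preserving (otherwise $\alpha$ is color‑reversing and $N$ is odd).

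\emph{Second count (components).} Here $N=\sum_i|\delta\cap c_i|$. For an orbit $\{c_i,c_j\}$ with $c_j=\alpha(c_i)\neq c_i$ one has $|\alpha(\delta)\cap c_i|=|\delta\cap c_j|$, hence $|\delta\cap c_i|+|\delta\cap c_j|=|\ell\cap c_i|$, which is even because a closed curve crosses the separating circle $\ell$ an even number of times; such orbits contribute $0$ mod $2$. For an $\alpha$‑invariant $c$, set $x:=|\delta\cap c|$, so $|\ell\cap c|=2x$; lifting $\alpha$ to the circle parametrizing $c$ gives an orientation‑preserving, fixed‑point‑free involution $\tilde\alpha$, which permutes the $2x$ preimages of $\ell\cap c$ and therefore acts on the $2x$ complementary arcs — which alternate between lying over $D$ and over $\alpha(D)$ — as a cyclic‑order‑preserving fixed‑point‑free involution, i.e.\ as the shift by $x$; since $\tilde\alpha$ must send $D$‑arcs to $\alpha(D)$‑arcs, $x$ is odd. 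Consequently $N\equiv a\pmod 2$. Comparing the two counts yields $a$ even $\iff$ $N$ even $\iff$ $\alpha$ color‑preserving $\iff$ $n$ even, which is the lemma.

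\emph{Main obstacle.} The delicate step is the $\alpha$‑invariant case: one must verify that the lift $\tilde\alpha$ to the parametrizing circle of $c$ is well defined — at a self‑crossing of $c$ it is pinned down by how $\alpha$ permutes the two local branches — and that $\tilde\alpha$ has order two, which holds because $\tilde\alpha^{2}$ fixes every regular point of $c$ and these are dense in the circle; then the only orientation‑preserving fixed‑point‑free involution of a cyclically ordered $2x$‑element set is the shift by $x$, which forces $x$ odd. The genericity requirements on $\delta$ (transversality to $\Gamma$, missing the crossings, and $\delta\cap\alpha(\delta)=\{p,\alpha(p)\}$, all arranged $\alpha$‑equivariantly), and the remark that a closed curve crosses an embedded separating circle evenly, are routine.
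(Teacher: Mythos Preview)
Your argument is correct and takes a genuinely different route from the paper's. The paper proceeds by induction on the number of antipodal pairs of crossings: it first orients the components $\alpha$-equivariantly (its Claim~1, that a self-antipodal component satisfies $\alpha(C(t))=C(t+\tfrac12)$, is precisely your assertion that the lift $\tilde\alpha$ is an orientation-preserving fixed-point-free involution of $S^1$), then applies orientation-preserving smoothings at antipodal pairs of crossings, noting that each such pair of smoothings changes the number of components by $0$ or $\pm2$ while leaving the color-preserving/reversing nature of $\alpha$ intact; this reduces to a base case of disjoint simple closed curves, handled by a short face-counting argument. You bypass the induction altogether with a single mod-$2$ intersection count against the $\alpha$-invariant separating circle $\ell=\delta\cup\alpha(\delta)$: the first count reads off the color behaviour of $\alpha$, the second reads off the parity $a$ of the number of self-antipodal components, and the shift-by-$x$ analysis forces each such component to contribute oddly. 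Your approach is more direct and uses only transversality and the classification of fixed-point-free involutions of $S^1$ (that $\tilde\alpha$ is orientation-preserving is automatic, since an orientation-reversing self-homeomorphism of $S^1$ always has a fixed point); the paper's smoothing reduction is closer in spirit to skein-type arguments and trades the analysis of $\ell\cap c$ for tracking two invariants through an inductive process.
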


\begin{proof} We first claim that the components of $L$ can be oriented such that $\alpha$ preserves the orientation. Let $C_1$ and $C_2$ two components of $L$. If $\alpha(C_1)=C_2$ then any choice for the orientation of $C_1$ fixes the orientation of $C_2$ according to $\alpha$. Suppose that a component $C$ is {\em self-antipodal}, that is, $\alpha(C)=C$. We first show that any orientation of $C$ is preserved by $\alpha$.
\smallskip

\begin{claim} Let $C : \mathbb{R} / \mathbb{Z} \to \mathbb{S}^2$ be an antipodal component. Then, up to parametrization, $C$ has constant speed implying thus $\alpha(C(t))=C(t+ \frac{1}{2})$ for any $t \in \mathbb{R} / \mathbb{Z}$. 
\end{claim}

\begin{proof} Let $x = C(t)$ and $y = \alpha(x)$ be two antipodal points on $C$. Let $\gamma_1$ (resp. $\gamma_2$) the path from $x$ to $y$ (resp. from $y$ to $x$) following $C$. We claim that these paths are antipodal to each other. Indeed, suppose that there is a self-antipodal arc $\gamma$ (contained, say in $\gamma_1$) parametrized by $t\in [0,1]$. Then, around the neighborhood of $\gamma(0)$ and $\gamma(1)$, we must have that $\gamma(t) = \alpha(\gamma(1-t))$ for small values of $t$. Therefore, by connectivity, we obtain $\gamma(t) = \alpha(\gamma(1-t))$ for all $t \in [0,1]$. The latter implies that the point $\gamma(\frac{1}{2})$ is self-antipodal, which is a contradiction.


We have thus that $\gamma_1$ and $\gamma_2$ are antipodal and have the same length. Since $C=\gamma_1\cup\gamma_2$ and it has constant speed, we have $y = \alpha(x) = \alpha(C(t)) = C(t+\frac{1}{2})$. We conclude that any orientation of $C$ is thus preserved by $\alpha$.     
\end{proof}

Let us then assume that $D$ has an orientation preserved by $\alpha$. Now, each crossing has one of the two homotopy type illustrated in Figure \ref{fig:hom-type}. 

\begin{figure}[H]
\centering
\includegraphics[width=0.7\linewidth]{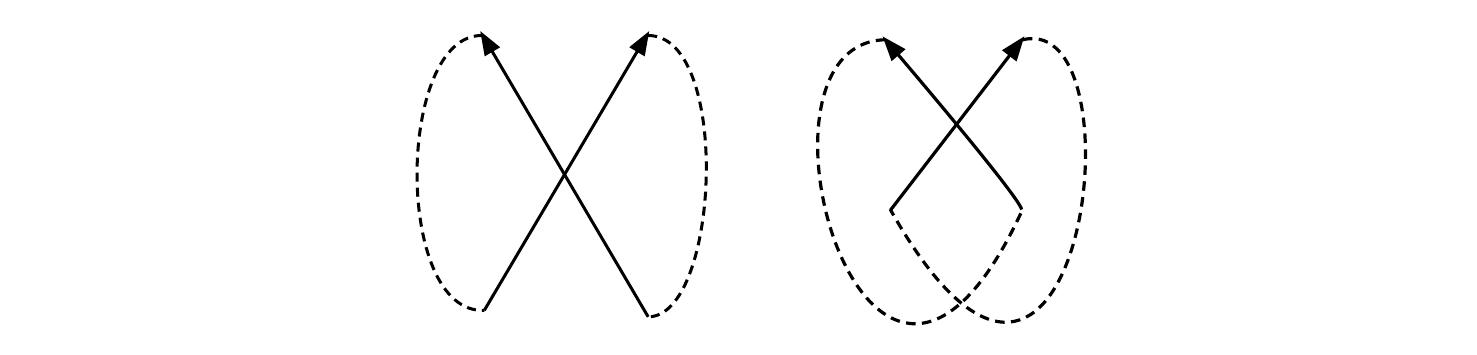}
\caption{Two homotopy types.}
\label{fig:hom-type}
\end{figure}

An {\em oriented-preserving} splitting of a crossing of $D(L)$ is the smoothing of the crossing preserving the orientation of $D(L)$, see Figure \ref{fig:smoth-or}. 

\begin{figure}[H]
\centering
\includegraphics[width=0.7\linewidth]{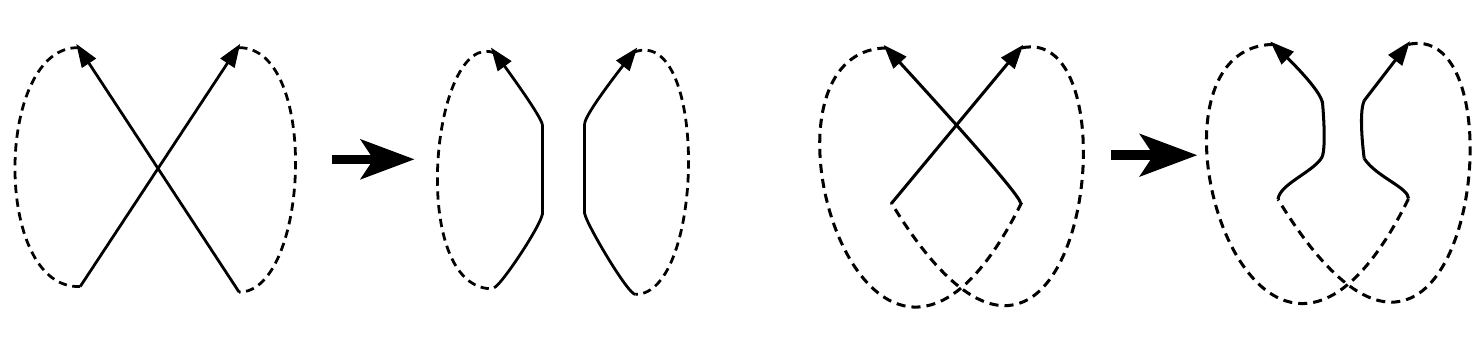}
\caption{(Left) Oriented-preserving splitting creating a new component. (Right) Oriented-preserving splitting reducing one component.}
\label{fig:smoth-or}
\end{figure}

We observe that each oriented-preserving splitting either creates one new component or reduces the number of components by one. Therefore, by applying an oriented-preserving splittings to two antipodal crossings, the parity of the number of components  of $D(L)$ is preserved. Moreover, the new link diagram is also centrally symmetric (by performing the two splittings in a symmetrical way) and the color-reversing (or the color-preserving) face coloring of $\alpha$ is also conserved. By carrying on oriented-preserving splittings, we end up with a set of antipodal symmetric disjoint simple closed curves in the sphere. We call this configuration the {\em base case}. 

\begin{claim}\label{claim:base}
The number of curves in the base case is even if and only if $\alpha$ is color-preserving. 
\end{claim}

\begin{proof}
({\em Necessity}) Showed in Remark \ref{re;anti} (b).
 \smallskip
 
({\em Sufficiency}) If $\alpha$ is color-reversing then there is an even number number of faces and therefore an odd number of curves.
\end{proof}

The result follows by induction on the number $m$ of antipodal pair of crossings where Claim \ref{claim:base} is the base case, that is, when $m=0$.
\end{proof}

In \cite[Question 2]{MRR}, it was asked the following

\begin{question}
Let $L$ be the alternating link arising from a map $(med(G), C_F, \chi_V^+)$ where $G$ is antipodally self-dual. Is it true that $L$ always has an odd number of components ?
\end{question}

Since $G$ is antipodally self-dual then $med(G)$ admits an antipodally symmetric embedding with $\alpha$  color-reversing, see \cite[Remark 3]{MRR}. The above question is thus answered affirmatively by Lemma \ref{lem:parity}.

\smallskip

We may now prove Theorem \ref{th:det:sym}.
\smallskip

{\em Proof of Theorem \ref{th:det:sym}.} Since $L$ is centrally symmentric, then, by Theorem \ref{thm:centralsymm}, there is a map $(G,\chi_E)$ such that $med(G)$ is antipodally symmetric (realized by $\alpha$) and $(med(G),C_F,\chi_V)$ determining a diagram representing $L$. Also, since the number of components of $L$ is even then, by Lemma \ref{lem:parity}, $\alpha$ is color-preserving. Therefore, again by Theorem \ref{thm:centralsymm},
$\alpha$ must be sign-reversing. 

W.l.o.g., we may suppose that the black faces of $med(G)$ correspond to the set of vertices $V(G)$ of $G$ and thus, by Remark \ref{re;anti} (b),  we have that $|V(G)|$ is even. As pointed out in Remark \ref{rem:2-aut}, $\alpha\in Aut(G)$. We claim that $\alpha$ gives a bijection between positive and negative spanning trees of $G$. Indeed, let $T$ be a spanning tree in $G$, since $\alpha$ is sign-reversing then $\chi(e)=-\chi(\alpha(e))$ (recall that $\chi(e)$ denotes the sign of edge $e$). Notice that $|E(T)|=|V(T)|-1=|V(G)|-1$ is odd since $|V(G)|$ is even. Hence,

$$\begin{array}{ll}
\sign(\alpha(T)) & = \prod\limits_{e\in E(T)} \chi(\alpha(e))\\
& = \prod\limits_{e\in E(T)} - \chi(e)\\
& = (-1)^{|E(T)|}\prod\limits_{e\in E(T)} \chi(e)\\
 &= - \prod\limits_{e\in E(T)} \chi(e)\\
& = -\sign(T).
\end{array}$$

Therefore, there are as many positive spanning trees as negative ones. The result follows by Lemma \ref{lem:main}.
\hfill$\square$

\section{Concluding remarks}

All the above results concerning the $FH_G$-polynomial can easily be extended in terms of {\em matroids} instead of graphs. 
In particular, the above recursive method to compute $FH_G(\w{x})$ (Proposition \ref{recformula}) behaves similarly as the so-called {\em Tutte polynomial} $t(M;x;y)$ associated to a matroid $M$. 
\smallskip

It is known that $t(M;1,1)$ counts the number of {\em bases} in $M$. If $M$ is a {\em graphic matroid}, that is, a matroid $M_G$ arising from a graph $G$, then $t(M_G;1,1)$ counts the number of spanning trees of $G$. It can easily be shown that  $FH_G(\w{0})=t(M_G;1,1)$.  


\appendix

\section{Determinant through the state model}\label{Append}

A nice combinatorial approach to calculate $\dett(L)$ for alternating diagrams was presented by Krebes \cite{Krebes} by using the state model for the Kauffman bracket. This model is helpful for our purpose, we thus quickly recall some main notions. We refer the reader to \cite{kauffman} for a detailed exposition on the Kauffman bracket.

Let $D$ be a connected link diagram. A {\em state} $S$ is a choice of {\em splitting} of type $L_0$ or $L_\infty$ of each crossing of $D$; see Figure \ref{fig4}.

\begin{figure}[H]
    \centering
    \includegraphics[width=0.4\textwidth]{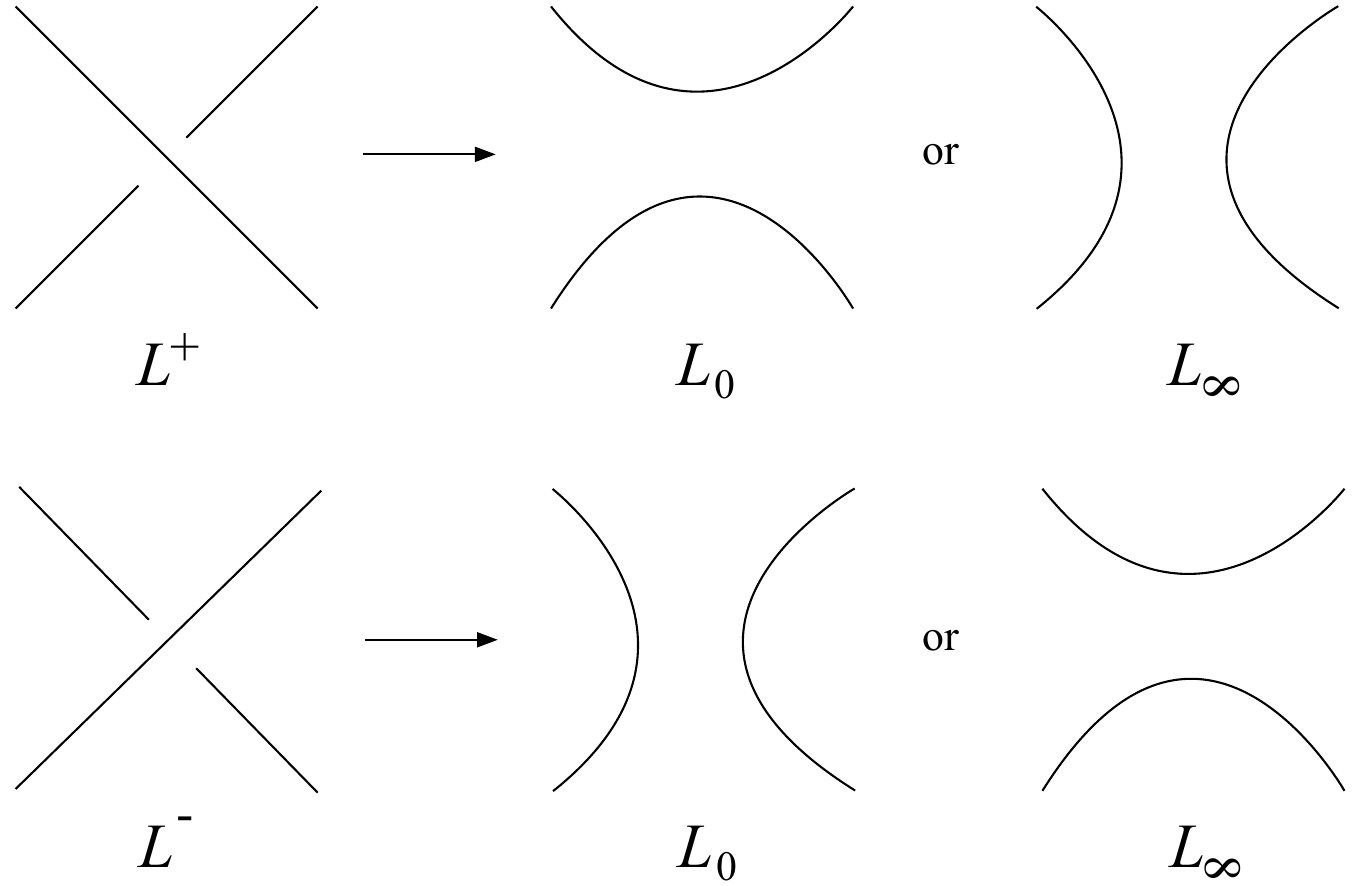}
    \caption{Splittings of type $L_0$ and $L_\infty$ depending on the sign associated to the crossing.}
    \label{fig4}
\end{figure}

We observe that after all splittings of $S$ we obtain a set of disjoint circles.
 
The {\em Kauffman bracket} $\langle D\rangle$ is a Laurent polynomial in a variable $A$ obtained by the following sum

$$\langle D\rangle=\sum\limits_{\tiny S \text{ state}}A^{\alpha(S)-\beta(S)} (-A^2-A^{-2})^{\gamma(S)-1}$$

where  $\alpha(S)$ and $\beta(S)$ denote the number of splittings in state $S$ of type $L_\infty$ and $L_0$ respectively and $\gamma(S)$ is the number of circles obtained after all splittings in $S$.

It is known that the Jones polynomial of a link $L$ can be calculated from the Kauffman bracket as follows,

$$V_L(t)=(-t)^{-3 w(D)/4} \langle D\rangle_{A=t^{-1/4}}$$

where $w(D)$ is the {\em writhe} of a diagram $D$ of $L$.

\begin{remark}\label{rem:mono} If we take $A=e^{\pi i/4}$ then $t=e^{-\pi i}=-1$ and, therefore 
$$\dett(L)=|V_L(-1)|=| \langle D\rangle_{A=e^{\pi i/4}}|.$$
\end{remark}

 A state $S$ is called {\em monocyclic} if $\gamma(S)=1$, that is, we obtain just one circle after all splittings in $S$.  We will focus our attention to monocyclic states since they are the only non-zero terms in the sum for $\langle D \rangle$. Indeed, since $-A^2-A^{-2}$ evaluates to $-e^{\pi i/2}-e^{-\pi i/2}=-i-(i^{-1})=-i-(-i)=0$ then the factor $(-A^2-A^{-2})^{\gamma(S)}$ in $\langle D\rangle$ is one if $\gamma(S)=1$ and zero otherwise. 


Let $D$ be a connected link diagram.  We notice that splitting a positive (resp. a negative) crossing in $D$ translates into a deletion or contraction (resp. contraction or deletion) of the corresponding edge in the associated Tait graph $G$ of $D$, Figure \ref{fig5} illustrates these choices.

\begin{figure}[H]
    \centering
    \includegraphics[width=0.5\textwidth]{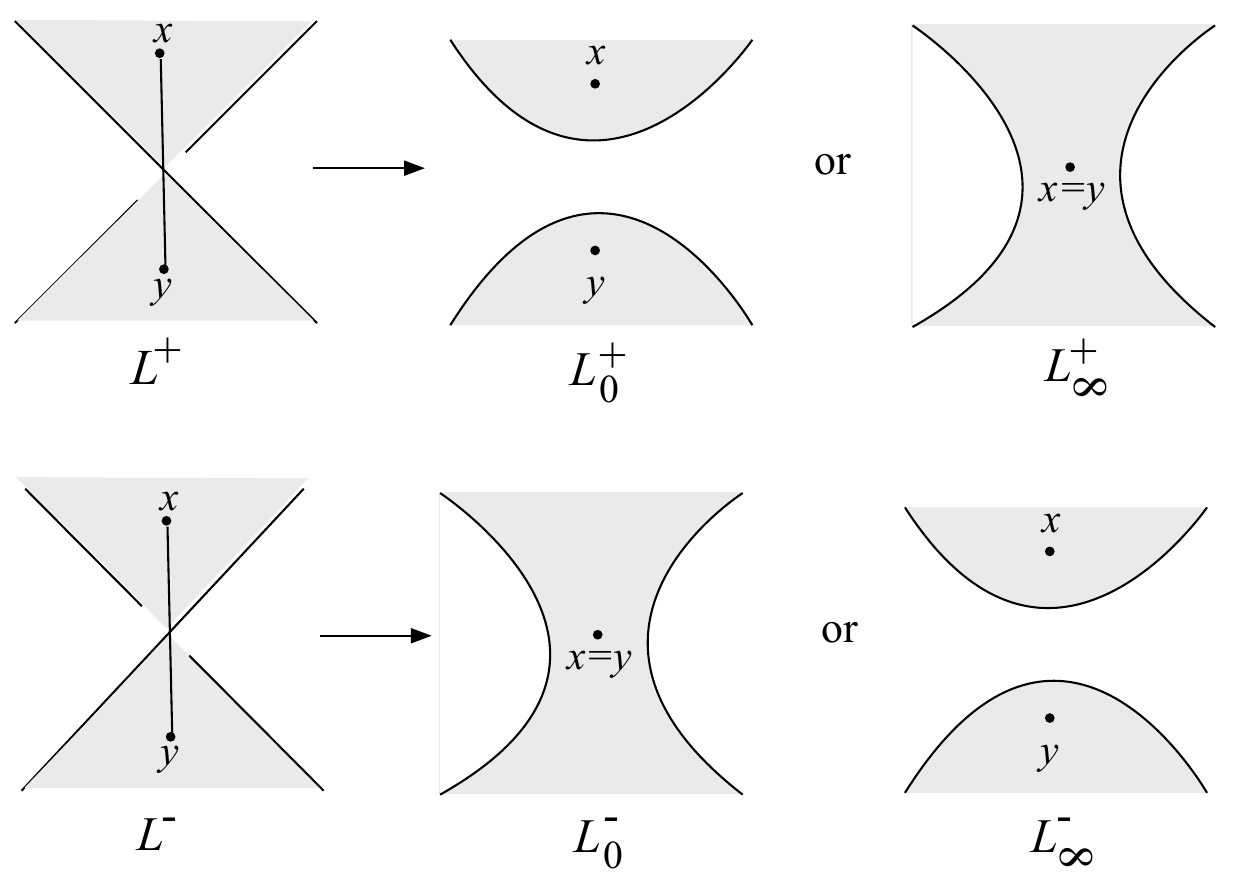}
    \caption{Splittings of a positive crossing (top) and of a negative crossing (bottom)}
    \label{fig5}
\end{figure}

We thus have that after all splittings of a state $S$ any edge in $G$ will be either {\em positive-deleted} ($L_0^+$),  {\em positive-contracted} ($L_\infty^+$), {\em negative-contracted} ($L_0^-$) or {\em negative-deleted} ($L_\infty^-$).

We recall that a {\em spanning tree} $T$ of a connected graph $G=(V,E)$ with $V(G)$ vertices and $E(G)$ edges is a tree covering the set $V(G)$, that is, $|V(T)|=|V(G)|$.

We give a characterization of the monocyclic states of a diagram.

\begin{lemma}\label{lem:spanning} Let $(G,\chi_E)$ be an edge-signed planar graph and let $D$ be the connected diagram arising from $(G,\chi_E)$. Then, a state $S$ of $D$ is monocyclic if and only if the set of (positive- and negative-) contracted edges (induced by $S$) forms a spanning tree of $G$.
\end{lemma}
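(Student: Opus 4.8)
The statement to be proven is Lemma~\ref{lem:spanning}: a state $S$ of the diagram $D$ arising from $(G,\chi_E)$ is monocyclic if and only if the set of contracted edges (positive- or negative-) induced by $S$ forms a spanning tree of $G$. The plan is to track, step by step, how each local splitting transforms both the diagram (into a collection of disjoint circles) and the Tait graph $G$ (via deletion/contraction of the corresponding edge), and then to match the single-circle condition on the diagram side with the spanning-tree condition on the graph side. The key bridge is the well-known correspondence between states of a diagram and subsets of $E(G)$: by the discussion preceding the lemma (see Figure~\ref{fig5}), for a positive crossing the $L_0$-splitting deletes the edge and the $L_\infty$-splitting contracts it, while for a negative crossing the roles are swapped. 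Hence a state $S$ determines a partition of $E(G)$ into a set $D_S$ of deleted edges and a set $C_S$ of contracted edges, and the minor $G$ gets reduced accordingly.

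First I would argue the easy direction of the bookkeeping: after performing all the splittings in $S$, the resulting collection of disjoint circles is in canonical bijection with the connected components of the graph obtained from $G$ by deleting the edges of $D_S$ and contracting the edges of $C_S$; equivalently, with the connected components of the spanning subgraph $(V(G), C_S)$ of $G$. This is a standard fact about Tait graphs and state circles (it can be proven by induction on the number of crossings, handling one crossing at a time and checking that deletion merges/keeps the count of circles consistently with the graph operation; the base case is a graph with no edges, where the state is the unsplit diagram giving one circle for each face-boundary, i.e.\ for each vertex). Once this identification is in place, $S$ is monocyclic, i.e.\ $\gamma(S)=1$, if and only if the spanning subgraph $(V(G),C_S)$ is connected.

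Next I would combine connectivity with a cardinality count. On the diagram side, the number of circles $\gamma(S)$ relates to the number of crossings and the size of $C_S$ by an Euler-characteristic argument: each contracted edge identifies two vertices, each deleted edge removes a potential identification, so $\gamma(S) = |V(G)| - \rk(C_S)$ where $\rk(C_S)$ is the rank of the edge set $C_S$ in the cycle matroid of $G$ (i.e.\ $|V(G)|$ minus the number of components of $(V(G),C_S)$). Combined with the previous step, $\gamma(S)=1$ forces $(V(G),C_S)$ to be connected and spanning, hence $\rk(C_S) = |V(G)|-1$. A connected spanning subgraph of rank $|V(G)|-1$ on $|V(G)|$ vertices has exactly $|V(G)|-1$ edges and therefore contains no cycle: it is a spanning tree. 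Conversely, if $C_S$ is a spanning tree then $(V(G),C_S)$ is connected, so $\gamma(S)=|V(G)|-\rk(C_S)=|V(G)|-(|V(G)|-1)=1$ and $S$ is monocyclic. This closes both directions.

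\textbf{Main obstacle.} The routine calculations are harmless; the delicate point is the clean statement and proof of the circle--component bijection, i.e.\ that the state circles of $S$ correspond exactly to the connected components of $(V(G),C_S)$, uniformly over positive and negative crossings. One must be careful that the sign of a crossing only affects \emph{which} splitting ($L_0$ or $L_\infty$) corresponds to contraction versus deletion, but not the final accounting — so the bijection depends only on the partition $(D_S, C_S)$ and not on $\chi_E$. I expect the induction step (removing one crossing, say the edge $e$, and comparing the state of $D$ with the induced state of $D\setminus e$ or $D/e$) together with a correct base case to be the part requiring the most care; everything after that is the short matroid-rank computation above.
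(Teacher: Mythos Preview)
Your central claim---that the state circles of $S$ are in bijection with the connected components of the spanning subgraph $(V(G),C_S)$, so that $\gamma(S)=|V(G)|-\rk(C_S)$---is false. Take $G$ to be the digon (two vertices, two parallel edges), so that $D$ is a Hopf link diagram. If $S$ is the state contracting \emph{both} edges, then $(V(G),C_S)=G$ has a single connected component, yet this state produces two circles, not one. The point is that a contraction merges the two \emph{black} regions at a crossing while a deletion merges the two \emph{white} regions, so the correct count is
\[
\gamma(S)+1 \;=\; c(V(G),C_S)\;+\;c(V(G^*),D_S^{*}),
\]
involving the dual graph as well. Thus $\gamma(S)=1$ forces both $(V(G),C_S)$ to be connected \emph{and} $(V(G^*),D_S^{*})$ to be connected; the first gives ``$C_S$ contains a spanning tree'' and the second gives ``$C_S$ is contained in a spanning tree'', and only together do they yield that $C_S$ is a spanning tree. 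Your argument captures only the first half.

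There is a second, independent slip: from ``$(V(G),C_S)$ is connected, hence $\rk(C_S)=|V(G)|-1$'' you conclude that $C_S$ has exactly $|V(G)|-1$ edges. But the matroid rank of an edge set is $|V(G)|$ minus the number of components regardless of how many edges there are; every connected spanning subgraph (including $G$ itself) has rank $|V(G)|-1$. So even granting your (incorrect) circle formula, acyclicity would not follow. The paper avoids all of this by arguing the two halves directly and topologically: if $C_S$ contained a cycle, contracting its edges would pinch off an extra circle (so $C_S$ is a forest); if $C_S$ were disconnected, the deleted edges across the cut would separate the circles of the pieces (so $C_S$ is connected). Your matroid-rank route can be salvaged, but only after you bring in the dual-graph term above.
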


\begin{proof} Suppose that $S$ is a monocyclic state. Let $X$ be the set of contracted edges induced by $S$. The signs of the edges do not matter anymore, since we are only interested in the topology of the shadow of the diagram. We first claim that $X$ is a forest. Suppose, by contradiction, that $X$ is not a forest, then $X$ admits a cycle $C$. Notice that $C$ could be a either a loop (Figure \ref{fig6} (a)), a face of $G$ (Figure \ref{fig6} (b)) or a region containing edges, in its interior, that induce a forest (see Figure \ref{fig6} (c). In any of these cases, after all splittings in $S$, the contractions of the edges of $C$ create a circle that will be disjoint from any other created circles implying the existence of at least two circles, which is a contradiction.

\begin{figure}[H]
    \centering
    \includegraphics[width=1\textwidth]{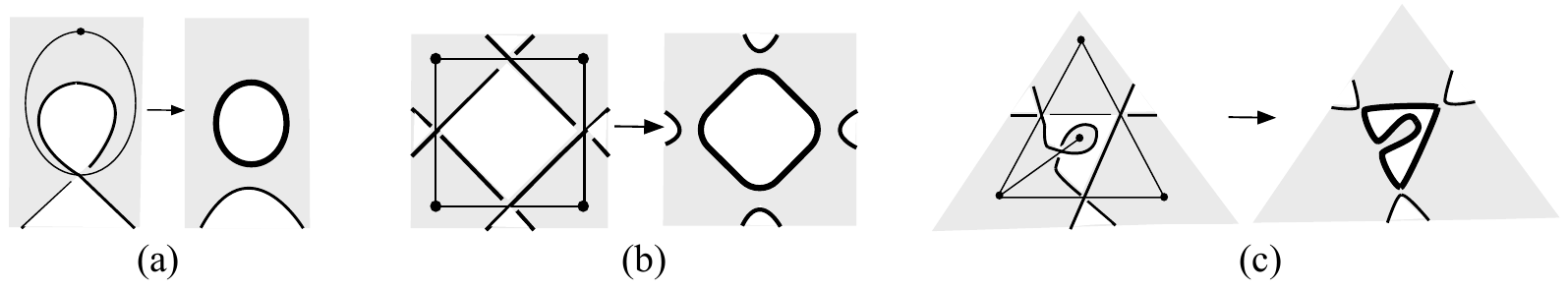}
    \caption{Negative- and positive-contractions of the edges of a cycle in $G$ and the corresponding circle (in bold) after the splittings of its edges.}
    \label{fig6}
\end{figure}

We now claim that the forest $X$ is connected. Suppose that $X$ admits at least two connected components and let $A$ be one of these components. Then, all the edges $\{x,y\}$ with $x\in V(A)$ and $y\in V(G)\setminus A$ are either negative- or positive-deleted. After all splittings in $S$ these edges will separate the circles created by the edges in $A$ from the circles created with the edges in the other components of $X$, implying again the existence of at least two circles, which is again a contradiction, see Figure \ref{fig7}. Therefore, $X$ is a tree.

\begin{figure}[H]
    \centering
    \includegraphics[width=0.5\textwidth]{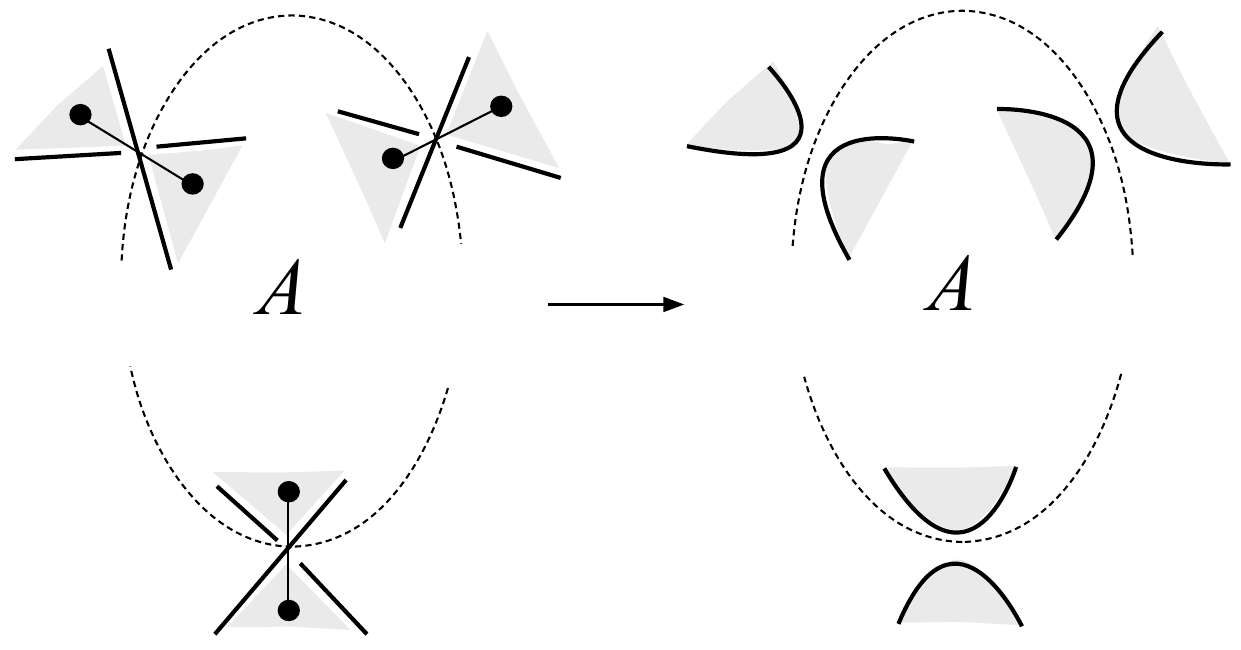}
    \caption{Negative- and positive deleted edges around the connected component $A$.}
    \label{fig7}
\end{figure}

Since $D$ is connected then $G$ cannot have isolated vertices thus $X$ covers all the vertices of $G$. Therefore, $X$ is a spanning tree of $G$.
\medskip

Suppose now that $X$ is spanning tree $T$ of $G$ formed by the negative-contracted and positive-contracted edges. We clearly have that all these edges induce one circle (wrapping up $T$), see Figure \ref{fig8}.

\begin{figure}[H]
    \centering
    \includegraphics[width=0.4\textwidth]{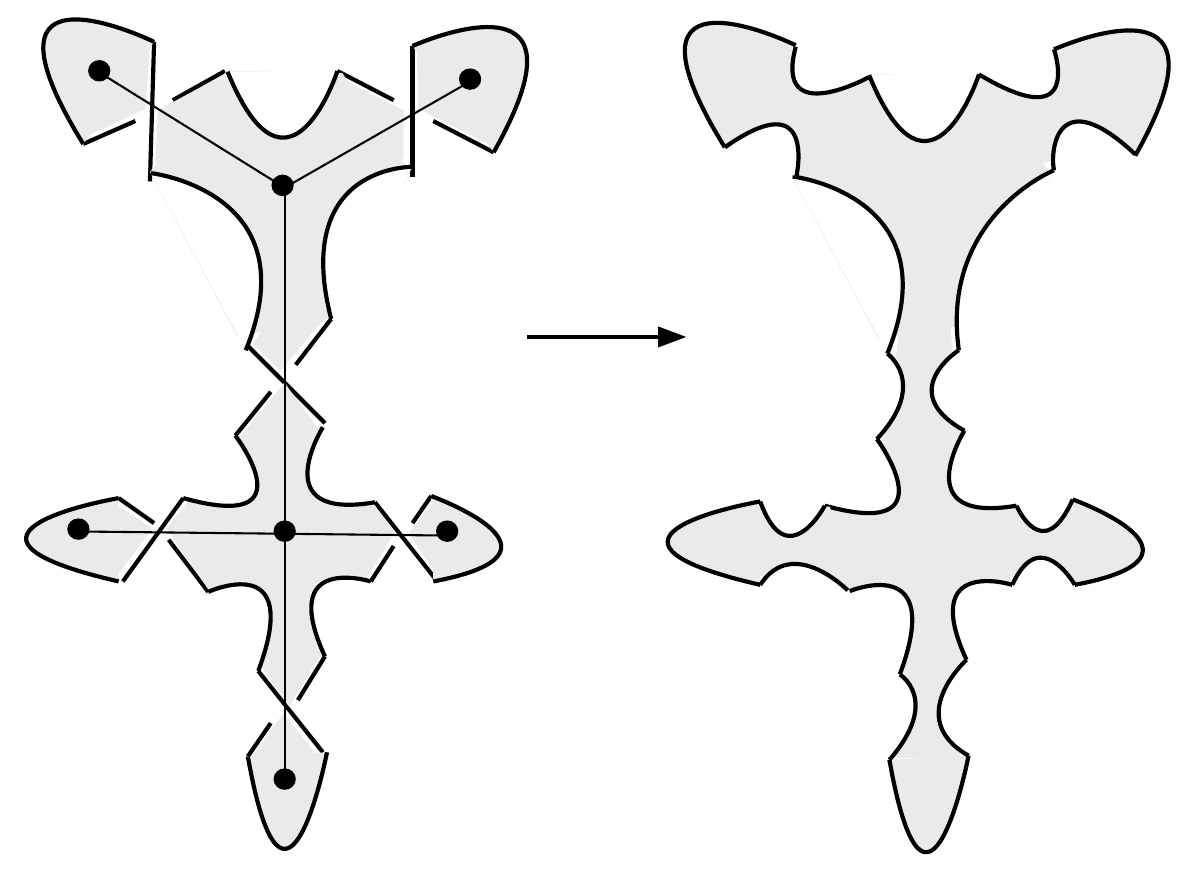}
    \caption{Negative- and positive contracted edges around a tree.}
    \label{fig8}
\end{figure}

Now, any edge $e\not\in T$ is a negative- or positive-deleted edge. Since $T$ is spanning then $T\cup e$ has a unique cycle. It is clear that the splitting corresponding to edge $e$ expand the circle induced by $T$ and so the number of circles is not increased, see Figure \ref{fig9}.

\begin{figure}[H]
    \centering
    \includegraphics[width=0.46\textwidth]{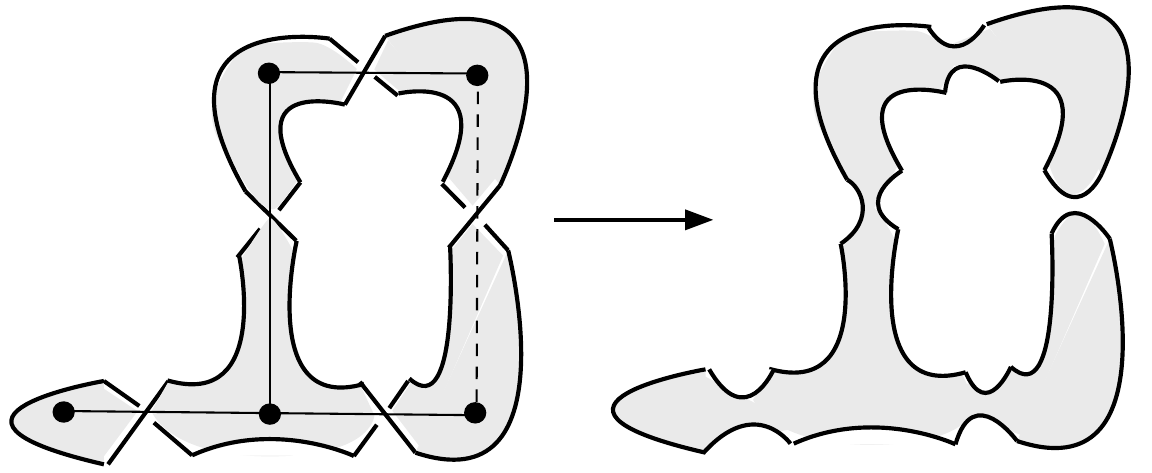}
    \caption{Splitting of negative and positive-edge of a tree (straight lines) and of a negative-deleted edge (dashed line).}
    \label{fig9}
\end{figure}

Therefore, after all splittings we end with only one circle, implying that $S$ is monocyclic.
\end{proof}


We may now prove Lemma \ref{lem:main}.
\medskip

{\em Proof of Lemma \ref{lem:main}.} Let $(G,\chi_E)$ be an edge-signed planar graph $G$ and let $S$ be a state of the diagram $D(G,\chi_E)$ on $n$ crossings. 
\smallskip

Let $c^+(S), c^-(S), d^+(S)$ and $d^-(S)$ be the number of positive-contracted, negative-contracted, positive-deleted and negative-deleted edges of $G$ respectively after all splittings $S$.

\begin{remark}\label{rem:tree} Let $S$ be a monocyclic state. Then, by Lemma \ref{lem:spanning}, $c^+(S)$ (resp. $c^-(S)$) is the number of positive  (resp. negative) edges of the corresponding spanning tree $T$ of $G$. We may write $c^+(T)$ (resp. $c^-(T)$) in this case.
\end{remark}

Let $n=n^++n^-$ where $n^+$ and $n^-$ denote the number of positive and negative crossings in $D$ respectively.
\smallskip

We have that 
\begin{equation}\label{eq:1}
\alpha(S)=c^+(S)+d^-(S), \ \ \   \beta(S)=c^-(S)+d^+(S)
\end{equation}

and

\begin{equation}\label{eq:1a}
c^+(S)+d^+(S)=n^+.
\end{equation}

Moreover, by Lemma \ref{lem:spanning}, we have

\begin{equation}\label{eq:2}
c^+(S)+c^-(S)=n-1.
\end{equation}

By combining \eqref{eq:1a} and \eqref{eq:2} we obtain

\begin{equation}\label{eq:3}
d^+(S)=c^-(S)+n^+-n+1=c^-(S)-n^-+1.
\end{equation}

We have
$$\begin{array}{lll}
\langle D\rangle_{A=e^{\pi i/4}} & =\sum\limits_{\tiny S \text{-monocyclic}}A^{\alpha(S)-\beta(S)} &\\ &=\sum\limits_{\tiny S \text{-monocyclic}}A^{n-2\beta(S)}&\\
&=\sum\limits_{\tiny S \text{-monocyclic}}A^{n-2c^-(S)-2d^+(S)} &\text{ (by \eqref{eq:1})} \\
&=\sum\limits_{\tiny S \text{-monocyclic}}A^{-4c^-(S) +2n^-+n-2} & \text{ (by \eqref{eq:3})} \\
&=A^{2n^-+n-2}\sum\limits_{\tiny T \text{- spanning tree}}A^{-4c^-(T)} & \text{ (by Remark \ref{rem:tree})} \\
&=e^{\pi i \frac{(2n^-+n-2)}{4}}\sum\limits_{\tiny T \text{- spanning tree}}e^{-\pi i c^-(T)} & \\
&=e^{\pi i \frac{(2n^-+n-2)}{4}}\sum\limits_{\tiny T \text{- spanning tree}}(-1)^{c^-(T)} & \\
\end{array}$$
\smallskip

Finally, since $c^-(T)$ is even (resp. odd) if and only if  $T$ is positive (resp. negative) then, by Remark \ref{rem:mono}, we obtain
$$\begin{array}{ll}
\dett(L) & =| \langle D\rangle_{A=e^{\pi i/4}}| =\left| \sum\limits_{\tiny T \text{- spanning tree}}(-1)^{c^-(T)} \right|\\
\\
& = |\# \{\text{positive spanning trees of } G\}-\# \{\text{negative spanning trees of } G\}|.
\end{array}$$
\hfill$\square$


\begin{thebibliography}{99}

\bibitem{Adam} C.A. Adams, Knot Book : An Elementary Introduction to the Mathematical Theory of Knots, {\em Amer. Math. Soc., Providence, Rhod e Island} (2000).

\bibitem{Ca} C. Carlet, Boolean functions for cryptography and coding theory, {\em Cambridge University Press} (2021).

\bibitem{CK} A. Champanerkar, I. Kofman, Twisting quasi-alternating links, {\em Proc. Amer. Math. Soc.} {\bf 137}(7) (2009), 2451-2458.

\bibitem{CK1} A. Champanerkar, I. Kofman, Spanning trees and Khovanov homology, {\em Proc. Amer. Math. Soc.} {\bf 137}(6) (2009), 2157-2167.

\bibitem{Crowell} R. H. Crowell,  Nonalternating links, {\em Illinois J. Math.} {\bf 3} (1959), 101-120.

\bibitem{DFK} O. T. Dasbach, D. Futer, E. Kalfagianni, X.-S. Lin, N.W. Stoltzfus, Alternating sum formulae for the determinant and other link invariants,
{\em J. Knot Theory Ramifications} {\bf 19}(6) (2010), 765-782Its .

\bibitem{GL} C. M. Gordon, R. A. Litherland, On the signature of a link, {\em Invent. Math.} {\bf 47}(1) (1978), 53–69.
 
\bibitem{jones} V. F.R. Jones, Hecke algebra representations of braid groups and link polynomials, {\em Ann. of Math.} {\bf 126} (1987), 335-388. 

\bibitem{kauffman} L. H. Kauffman, States models and the Jones polynomial, {\em Topology} {\bf 26}(3) (1987), 395-407.

\bibitem{Krebes} D. A. Krebes,  An obstruction to embedding 4-tangles in links,  {\em J. Knot Theory and Its Ramifications} {\bf 8}(3) (1999), 321-352.

\bibitem{Liv} C. Livingstone, Knot Theory, {\em The Carus Math. Monographs} {\bf 24} (1993).

\bibitem{MRR}  L. Montejano, J. L. Ram\'irez Alfons\'in, I. Rasskin, Self-dual maps II: links and symmetry, {\em SIAM Discrete Mathematics}, {\bf 37}(1) (2023), 191-220.

\bibitem{MRR1}  L. Montejano, J. L. Ram\'irez Alfons\'in, I. Rasskin, Self-dual maps III: projective links, {\em J. Knot Theory and Its Ramifications}, 
{\bf 32} (10) (2023) No 2350066, 22 pp. 

\bibitem{Rolf} D. Rolfsen, Knots and Links, Mathematics Lecture Serie {\bf 7}, Publish or Perish, Berkeley, CA, 1976.

\bibitem{St} A. Stoimenow, Determinants of knots and Diophantine equations, {\em Acta Arithmetica}, {\bf 129}(4) (2007), 363-387.

\bibitem{St1} A. Stoimenow,  Graphs, d\'eterminants of knots and hyperbolic volume, {\em Pacific Journal of Mathematics}, {\bf 232}(2) (2007), 423-451.

\bibitem{Th} M. Thistlethwaite, A spanning tree expansion of the Jones polynomial, {\em Topology} {\bf 26} (1987), 297-309.

\end{thebibliography}
\end{document}